\def\<{\langle}
\def\>{\rangle}
\def\D{\boldsymbol{\mathcal{D}}}
\def\2{L^2}
\def\1{L^2(0,1)}
\def\lam{\lambda}
\newtheorem{thm}{\bf Theorem}[section]
\newtheorem{lem}[thm]{\bf Lemma}
\newtheorem{prop}[thm]{\bf{Proposition}}
\title{Global Well-posedness and Asymptotic Behavior of a Class of Initial-Boundary-Value Problem of the
 Korteweg-de Vries Equation on a Finite Domain}
\author{Ivonne Rivas\\
{\small Department of Mathematical Sciences,} \  \\
{\small University of Cincinnati } \\ {\small Cincinnati, Oh 45221} \\
{\small
email: rivasie@mail.uc.edu} \\
\qquad \vspace{.2in} \\
 Muhammad Usman
\\ {\small  Department of Mathematics} \\ {\small University of Dayton}
\\ {\small
Dayton, Ohio 45469} \\ {\small Muhammad.Usman@notes.udayton.edu}\\
\quad \vspace{.2in}\\ Bing-Yu Zhang \\ {\small Department of
Mathematical Sciences} \\ {\small University of Cincinnati}
\\ {\small Cincinnati,
Ohio 45221} \\ {\small email: zhangb@ucmail.uc.edu}}
\date{}
\begin{document}

\maketitle
\newpage
\begin{abstract}
In this paper, we study a class of initial boundary value problem
(IBVP) of the Korteweg-de Vries equation posed on a finite interval
with nonhomogeneous boundary conditions. The IBVP is known to be
locally well-posed, but its  global $L^2-$ \emph{a priori} estimate
is not available and therefore it is not clear whether its solutions
exist globally or blow up in finite time. It is shown in this paper
that the solutions exist globally as long as their initial value and
the associated boundary data are small, and moreover, those
solutions decay exponentially if their  boundary data decay
exponentially

\end{abstract}

\section{Introduction}
Considered herein  is  an  initial-boundary value problem (IBVP) for
the Korteweg-de Vries equation
 posed on a finite interval $(0,L)$ with nonhomogeneous boundary conditions,
 namely,

    \begin{equation}\label{PP}
        \begin{cases}
        u_t+u_x+u_{xxx}+uu_x=0,\qquad    x\in (0,L),  \ t >0,
        \\ u(x,0)=\phi(x), \\
        u(0,t)=h_1(t),\ u_x(L,x)=h_2(t),\ u_{xx}(L,t)=h_3(t).
        \end{cases}
    \end{equation}

 This IBVP was considered by Colin and Ghidaglia in 2001
 \cite{col-ghida} as a  model for propagation of surface water waves in the situation where
 a wave-maker is putting energy in a finite-length channel from the left $(x=0)$ while the
right end $(x=L)$ of the channel is free (corresponding the case of
$h_2=h_3=0$). In particular, they studied  the IBVP (\ref{PP}) for
its well-posedness in the  space $H^s (0,L)$ and obtained the
following results.

\medskip
\noindent {\bf Theorem A:} \emph{\begin{itemize} \item[(i)] \ Given
$h_j\in C^1([0, \infty)), \ j=1,2,3$ and $\phi \in H^1 (0,L)$
satisfying $h_1(0)=\phi (0)$, there exists a $T>0$ such that the
IBVP (\ref{PP}) admits a solution (in the sense of distribution)
\[ u\in L^{\infty}(0,T; H^1(0,L))\cap C([0,T]; L^2 (0,L)) .\]
\item[(ii)] Assuming $h_1=h_2=h_3\equiv 0$, then for any $\phi \in
L^2 (0,L)$, there exists a $T>0$ such that the IBVP (\ref{PP})
admits a unique weak solution $u\in C([0,T]; L^2 (0,L))\cap L^2
(0,T; H^1 (0,L))$.
\end{itemize}
}

\medskip
The result is temporally local in the sense that the solution $u $
is only guaranteed to exist on the time interval $(0, T)$, where $T$
depends on the size of the initial value $\phi $ and the boundary
data $h_j, j=1,2,3$  in the space $H^1 (0,L)$ (or $L^2 (0,L)$) and
$C^1_b (0, \infty)$, respectively. A problem  arises naturally.

\medskip \emph{{\bf Problem B:} Does the solution exist globally?}

\medskip
Usually, with the local well-posedness in hand, one needs to
establish certain global \emph{a priori estimate} of the solutions
to obtain the global well-posedness. However, this task  turns out
to be  surprisingly  difficult and challenging since the
$L^2-$energy of the solution $u$ of the IBVP (\ref{PP}) is not
conserved as in the situation  of the KdV equation posed on the
whole line $\mathbb{R}$ or on a periodic domain $\mathbb{T}$ even in
the case of homogeneous boundary conditions ($h_j\equiv 0, \
j=1,2,3$). Indeed, for any smooth solution $u $ of the IBVP
(\ref{PP}) with $h_j\equiv 0, \ j=1,2,3$, it holds that
 \[ \frac{d}{dt} \int ^L_0 u^2(x,t)dx =-\frac12 u^2(0,t)+
\frac23 u^3(L,t).\]  The lack of an effective means to deal with the
term $\frac23 u^3(L,t)$ makes it hard to establish the needed
global
 \emph{a priori} estimate for the solutions of  the IBVP (\ref{PP}) in the
space $L^2 (0,L)$. Consequently, Problem B  is open even for the
homogeneous IBVP (\ref{PP}).

\smallskip
In \cite{col-ghida}, Colin and Ghidaglia  provided a partial answer
to Problem B by showing  that \emph{the solution $u$ of the IBVP
(\ref{PP}) exists globally in $H^1(0,L)$} \emph{if the size of its
initial value $\phi \in H^1 (0,L)$ and its boundary values $h_j\in
C^1([0, \infty )), \ j=1,2,3$ are all small.}

\smallskip
Recently, the IBVP (\ref{PP}) has been  studied  by Kramer and Zhang
\cite{kz}, and  Kramer, Rivas and Zhang \cite{rkz} to address an
open question of Colin and Ghidaglia  \cite{col-ghida} regarding
some well-posedness issues of the IBVP (\ref{PP}). They obtained the
following well-posedness results for the IBVP (\ref{PP}) \cite{kz,
rkz}.

\medskip
\noindent {\bf Theorem C}: \emph{Let $s>-1$ and $T>0$  and $r
>0$ be given with
\[ s\ne \frac{2j-1}{2}, \quad j=1,2,3,\cdots . \]
There exists a $T^*
>0$ such that for given $s-$compatible
\footnote{The reader is referred to \cite{kz} for the precise
definition of $s-$compatibility for the IBVP (\ref{PP}). One of the
sufficient conditions for $\phi, h_1,h_2, h_3$ to be $s-$compatible
is $\phi \in H^s_0 (0,L)$ and $$h_1\in H^{\frac{s+1}{3}}_0 (0,T], \
h_2\in H^{\frac{s}{3}}_0 (0,T],\ h_3\in H^{\frac{s-1}{3}}_0
(0,T].$$}
$$\phi \in H^s (0,L), \quad h_1
 \in H^{\frac{s+1}{3}} (0,T), \quad  h_2\in H^{\frac{s}{3}}(0,T),
 \quad h_3\in H^{\frac{s-1}{3}}(0,T) $$  satisfying
 \[ \|\phi \|_{H^s(0,L)} +\| h_1\|_{H^{\frac{s+1}{3}}(0,T)} +\|h_2\|_{H^{\frac{s}{3}}(0,T)}
 +\|h_3\|_{H^{\frac{s-1}{3}}(0,T)} \leq r,\]
 the IBVP (\ref{PP}) admits a unique solution
 \[ u\in C([0,T^*]; H^s(0,L))\cap L^2 (0,T^*; H^{s+1} (0,L)) .\]
 Moreover, the solution $u$ depends Lipschitz continuously on $\phi $ and
 $h_j, j=1,2,3$ in the corresponding spaces.
}

\medskip
{\bf Remarks:}

\begin{itemize}
\item[(1)] The well-posedness presented in Theorem C is in its full
strength; it includes uniqueness, existence and (Lipschitz)
continuous dependence as well as persistence (the solution $u$ forms
a continuous flow in the space $H^s (0,L)$).

\item[(2)]For the well-posedness of the IBVP (\ref{PP}) in the space
$H^s (0,L)$, the regularity conditions imposed on the boundary data
$h_j, \ j=1,2,3$ are optimal. In particular, when $s=1$, it is only
required that  $h_1\in H^{\frac23} (0,T)$, $h_2 \in H^{\frac13}
(0,T)$ and $h_3\in L^2(0,T)$ instead of $h_1, h_2, h_3 \in C^1_b
(0,T)$ as in Theorem A.
\end{itemize}

Nevertheless, the well-posedness result presented in Theorem C is
still temporal local.   The question whether the solution exists
globally remains open.  In this paper, we continue to study the IBVP
(\ref{PP}) but emphasizing on the issues of its global
well-posedness in the space $H^s(0,L)$ and the long time asymptotic
behavior of those globally existed solutions. In order to describe
our results more precisely, we first introduce some notations.

\smallskip
For given $s\geq 0$, $t\geq 0$ and $T>0$, let $\vec{h}:= (h_1,h_2,
h_3)$,
\[ B^s_{(t,t+T)}= H^{\frac{s+1}{3}}(t, t+T)\times
H^{\frac{s}{3}}(t, t+T)\times H^{\frac{s-1}{3}}(t, t+T)\] and
\[
Y^s_{(t,t+T)} =C([t, t+T];H^s(0,L))\cap L^2 (t, t+T; H^{s+1} (0,L)),
\quad X^s_{(t, t+T)}=H^s(0,L)\times B^s_{(t, t+T)} \] In addition,
let
\[ B^s_T =\{ \vec{h}\in B_{(t,t+T)}^s \ for \ any \ t\geq 0: \
\sup_{t\geq 0} \| \vec{h}\|_{B^s_{(t, t+T)}} < \infty \}, \quad
X^s_T= H^s(0,L)\times B^s_T,
\] and
\[ Y^s_T =\{ u\in Y_{(t,t+T)}^s \ for \ any \ t\geq
0: \ \sup_{t\geq 0} \| u\|_{Y^s_{(t, t+T)}} < \infty \} \] Both
$B^s_T$ and $Y^s_T$ are Banach spaces equipped with the norms
\[  \| \vec{h}\|_{B^s_T} :=\sup _{0< t< \infty} \| \vec{h}\|_{B^s_{(t, t+T)}} \]
and
\[  \| u\|_{Y^s_T} :=\sup _{0< t< \infty} \| u\|_{Y^s_{(t, t+T)}}
,\]respectively. If $s = 0$, the superscript $s$ will be omitted
altogether, so that
\[ B_{(t, t+T)}= B^0_{(t,t+T)}, \ X_{(t, t+T)}= X^0_{(t, t+T)}, \
Y_{(t, t+T)}= Y^0_{(t, t+T)} \] and
\[ B_T=B^0_T, \ X_T=X^0_T, \ Y_T=Y^0_T .\]
Moreover, because of their frequent occurrence, it is convenient to
abbreviate the norms of $u$ and $h$ in the space $H^s (0,L)$ and
$H^s (a,b))$, respectively, as
\[ \|u\| _s= \|u\|_{H^s (0,L)}, \qquad |h|_{s,(a,b)} ,\]
and
\[ \| u\| =\|u\|_{L^2 (0,L)}, \qquad |h|_{(a,b)}= \|h\|_{L^2 (a,b)} .\]

\smallskip
 The main results of this paper are summarized in the following two
 theorems. The first one states that the small amplitude solutions
 exist globally.
 \begin{thm}[Global well-posedness]\label{global} Let $s\geq 0$ with
\[ s\ne \frac{2j-1}{2}, \quad j=1,2,3,\cdots . \]
 There exist  positive constants $\delta $ and
 $T$ such that
 for any $s-$compatible $(\phi , \vec{h})\in X^s_T$ with
 \[ \| (\phi , \vec{h})\|_{X^s_T} \leq \delta , \]
 the IBVP (\ref{PP}) admits a unique solution $u\in Y^s_T$.
 \end{thm}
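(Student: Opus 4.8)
The plan is to construct the global solution on the infinite half-line $t>0$ by a contraction-mapping argument, but carried out in the Banach space $Y^s_T$ whose norm measures the solution uniformly on all time windows $(t,t+T)$ of a fixed length $T$. The key point is that the local theory behind Theorem C already produces, for each such window, a solution operator, and that this operator is in fact a contraction on a ball whose radius is comparable to the data, provided the data are small enough. First I would set up the integral (Duhamel) formulation: write $u = \Psi(u)$ where $\Psi(u)$ is the mild solution of the linear IBVP with the boundary data $\vec h$, the initial data $\phi$, and forcing $-uu_x = -\tfrac12 (u^2)_x$. From the linear estimates underlying Theorem C — the boundary-integral-operator bounds and the Kato smoothing estimate — one obtains, on a single window $(t,t+T)$,
\[
\|\Psi(u)\|_{Y^s_{(t,t+T)}} \le C\left( \|u(\cdot,t)\|_s + \|\vec h\|_{B^s_{(t,t+T)}} + \|uu_x\|_{L^1(t,t+T;H^s(0,L))}\right),
\]
with $C$ independent of $t$ and of $T\in(0,1]$ (uniformity in $t$ is automatic because the equation is autonomous; uniformity for small $T$ is where one must be a little careful).

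Next I would estimate the nonlinearity. The bilinear estimate $\|uv_x + vu_x\|_{L^1(t,t+T;H^s)} \le C\, T^{\theta}\, \|u\|_{Y^s_{(t,t+T)}}\|v\|_{Y^s_{(t,t+T)}}$ for some $\theta>0$ (this is exactly the estimate that makes the local theory run, using that $Y^s$ controls $L^2(t,t+T;H^{s+1})$ together with $C([t,t+T];H^s)$) gives, on each window, $\|\Psi(u)-\Psi(v)\|_{Y^s_{(t,t+T)}} \le C T^\theta (\|u\|+\|v\|)\|u-v\|$. Taking the supremum over all windows $t\ge 0$, and using that $\|u(\cdot,t)\|_s \le \|u\|_{Y^s_T}$ for every $t$, yields the global-in-time estimates
\[
\|\Psi(u)\|_{Y^s_T} \le C\left(\|(\phi,\vec h)\|_{X^s_T} + T^\theta \|u\|_{Y^s_T}^2\right),\qquad
\|\Psi(u)-\Psi(v)\|_{Y^s_T} \le C T^\theta\big(\|u\|_{Y^s_T}+\|v\|_{Y^s_T}\big)\|u-v\|_{Y^s_T}.
\]
Now fix $T$ small enough that $C T^\theta < \tfrac14$, then choose $\delta$ small so that the closed ball $\{\|u\|_{Y^s_T}\le 2C\delta\}$ is mapped into itself and $\Psi$ is a contraction there; the unique fixed point is the desired global solution $u\in Y^s_T$. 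Uniqueness in all of $Y^s_T$ (not just in the small ball) follows by a standard continuation/absorption argument using the same bilinear estimate on windows, or directly from the uniqueness already granted by Theorem C on each finite window.

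The main obstacle — and the conceptual heart of the matter — is establishing the linear estimates uniformly over the infinite family of time windows, i.e.\ that the constant $C$ above does not degrade as $t\to\infty$. For the Cauchy problem and the boundary-forcing terms this follows from time-translation invariance of the KdV group and of the boundary integral operators, so one must check that the decomposition of the solution used in Theorem C (Cauchy part + boundary part + Duhamel part) has each piece bounded by a $t$-independent constant on $(t,t+T)$; this is where I would spend the most care, reformulating the linear estimates of \cite{kz, rkz} in translation-invariant form. A secondary technical point is the gluing: although each window gives a solution, one must verify that the fixed point $u$, restricted to $(t,t+T)$, coincides with the local solution there and is genuinely a distributional solution on $(0,\infty)$ — this is immediate from uniqueness on overlapping windows. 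Note that this argument gives global existence precisely because smallness of the data propagates: the a priori bound $\|u\|_{Y^s_T}\le 2C\delta$ holds on every window by the fixed-point construction, sidestepping entirely the missing $L^2$ conservation law and the troublesome boundary term $\tfrac23 u^3(L,t)$ discussed in the introduction.
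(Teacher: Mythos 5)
There is a genuine gap, and it sits at the exact point you identify as the ``conceptual heart'' but then dismiss as a matter of translation invariance. Your key global estimate
\[
\|\Psi(u)\|_{Y^s_T} \le C\left(\|(\phi,\vec h)\|_{X^s_T} + T^\theta \|u\|_{Y^s_T}^2\right)
\]
does not follow from the window-by-window bound you state. On the window $(t,t+T)$ the linear part of $\Psi(u)$ is controlled by $\|u(\cdot,t)\|_s$, the trace of the candidate solution at time $t$, not by the data $(\phi,\vec h)$. Replacing $\|u(\cdot,t)\|_s$ by $\|u\|_{Y^s_T}$, as you propose, gives $\|\Psi(u)\|_{Y^s_T}\le C\|u\|_{Y^s_T}+\cdots$ with the constant $C\ge 1$ coming from the linear theory; this is not a contraction and the small ball is not mapped into itself. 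Equivalently: translation invariance does give you the \emph{same} constant $C$ on every window, but composing the windows gives $C^n$, which diverges. No choice of small $T$ or small $\delta$ repairs this, because the obstruction is in the linear propagation, not in the nonlinearity. This is precisely why the missing $L^2$ a priori estimate cannot be sidestepped by a purely perturbative fixed-point argument in $Y^s_T$.

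The ingredient your argument is missing is the content of Section 2 of the paper: the semigroup $W(t)$ generated by $Ag=-g'''-g'$ with these boundary conditions is exponentially stable, $\|W(t)\phi\|\le Ce^{-\gamma t}\|\phi\|$ (Proposition \ref{decay-1}, which rests on the spectral analysis of Lemmas \ref{noimag} and \ref{resolvent}). The paper then chooses $T$ \emph{large} so that $Ce^{-\gamma T}=:\gamma_0<\tfrac12$, and propagates smallness window to window via the discrete iteration $y_n=u(\cdot,nT)$, $\|y_{n+1}\|\le \gamma_0\|y_n\|+c\|y_n\|^2+c\|\vec h\|_{B_{(nT,(n+1)T)}}$, closed by Lemma \ref{lem2.2} (Proposition \ref{3-1}). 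Note the opposite choice of $T$: you shrink $T$ to tame the nonlinearity, the paper enlarges $T$ to make the linear return map a strict contraction and tames the nonlinearity by shrinking $\delta$ instead. A secondary omission: for $s>0$ the paper does not run a contraction in $Y^s_T$ at all; it treats $s=0$ first, then $s=3$ by setting $v=u_t$ and applying the variable-coefficient linear theory (Theorem \ref{th2.8}), and obtains $0<s<3$ by Tartar's nonlinear interpolation. Your $H^s$-level bilinear estimate on windows would need separate justification, but the decisive missing idea is the exponential decay of $W(t)$.
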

 The second one states that the small amplitude solutions decay
 exponentially as long as their boundary data decay exponentially.
 \begin{thm}[Asymptotic behavior]\label{long time} If, in addition to the assumptions of Theorem
 \ref{global}, there exist   $\gamma _1>0$,  $C_1>0$   and $g\in B_{T}^s$ such that
 \[ \| \vec{h}\|_{B^s_{(t, t+T)}} \leq g(t)e^{-\gamma _1 t } \quad for \
 t\geq 0,\] then there exists $\gamma  $ with $0< \gamma  \leq
 \gamma _1 $  and $C_2 >0$  such that the corresponding solution $u$
 of the IBVP(\ref{PP}) satisfies
 \[ \|u\|_{Y^s_{(t, t+T)}} \leq C_2\| (\phi , \vec{h})\|_{X^s_T} e^{-\gamma t} \quad for \
 t\geq 0. \]
 \end{thm}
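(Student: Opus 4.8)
The plan is to prove Theorem \ref{long time} by a bootstrapping argument that propagates the exponential decay from each time-window to the next, exploiting the exponential stability of the associated linear problem together with the smallness already available from Theorem \ref{global}. First I would record the relevant linear estimate: the solution operator for the linearized IBVP (the free evolution from initial data plus the boundary-integral operator) maps $X^s_{(t,t+T)}$ into $Y^s_{(t,t+T)}$ and, more importantly, the homogeneous part (zero boundary data) is exponentially stable, i.e. there are constants $M\ge 1$ and $\gamma_0>0$ with $\|W(\cdot)\psi\|_{Y^s_{(t,t+T)}}\le M e^{-\gamma_0 t}\|\psi\|_s$. This is the analogue for the present boundary conditions of the known exponential-decay estimates for the linear KdV on a finite interval; I would either cite it from the linear theory underlying Theorem C or sketch it via the standard multiplier $\int_0^L (1+x)u^2\,dx$ applied to the linear equation, which for these boundary conditions yields a strict dissipation inequality.

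Next I would set up the iteration on the windows $I_n=[nT,(n+1)T]$. Write the solution via Duhamel as $u=W(\cdot)\phi + \mathcal{B}\vec h - \int_0^{\cdot} W(\cdot-\tau)(uu_x)(\tau)\,d\tau$, and let $a_n:=\|u\|_{Y^s_{I_n}}$. Using the linear estimates over $I_n$, the exponential decay of the boundary forcing $\|\vec h\|_{B^s_{I_n}}\le C g(nT) e^{-\gamma_1 nT}$, and the bilinear estimate $\|\int W(\cdot-\tau)(uu_x)\,d\tau\|_{Y^s_{I_n}}\lesssim \|u\|_{Y^s_{I_{n-1}\cup I_n}}^2$ (the same nonlinear estimate that produces the local theory in Theorem C, applied window-by-window), I would derive a recursive inequality roughly of the shape
\[ a_{n+1}\le M e^{-\gamma_0 T} a_n + C e^{-\gamma_1 nT}\|(\phi,\vec h)\|_{X^s_T} + C\delta\, (a_{n-1}+a_n). \]
Here $C\delta$ is small because of the smallness hypothesis from Theorem \ref{global}; choosing $\delta$ small enough that $M e^{-\gamma_0 T}+2C\delta =: \kappa<1$, a standard discrete Gronwall/geometric-series argument shows $a_n\le C_2\|(\phi,\vec h)\|_{X^s_T}\, e^{-\gamma n T}$ with $\gamma$ any rate below $\min\{\gamma_0,\gamma_1\}$ (and below the rate $-\frac1T\log\kappa$). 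Summing over the window $[t,t+T]\subset I_n\cup I_{n+1}$ upgrades the discrete decay to the continuous-in-$t$ statement claimed.

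The main obstacle is getting an \emph{honest} contraction constant in the recursion: the linear homogeneous semigroup only decays if $T$ is chosen large enough that $Me^{-\gamma_0 T}<1$, but $T$ is also the window length fixed in Theorem \ref{global} and used for the local well-posedness and the bilinear estimate, so one must check these choices are compatible — concretely, that Theorem \ref{global} can be (re)proved for that same large $T$, with a correspondingly smaller $\delta$. A secondary subtlety is that the nonlinear term on window $I_n$ depends on $u$ on a slightly larger set (because of the Duhamel integral starting at $0$, or because the trace/Bourgain-type bilinear estimate needs a neighborhood); one handles this by absorbing the "tail" $\int_0^{nT}W(\cdot-\tau)(uu_x)\,d\tau$ into the data at time $nT$, i.e. restarting the evolution at $t=nT$ with initial datum $u(\cdot,nT)$ whose norm is already controlled by $a_n$, so only the semigroup decay over one window is needed. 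Finally, one must verify that multiplying $\delta$-smallness by the decaying boundary data keeps the $s$-compatibility conditions intact on every window — this is automatic since restriction and time-translation preserve the compatibility classes.
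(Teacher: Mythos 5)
Your nonlinear mechanism is essentially the one the paper uses: the authors also iterate over the windows $[nT,(n+1)T]$, set $y_n=u(\cdot,nT)$, derive a recursion of the form $\|y_{n+1}\|\le r\|y_n\|+c\|y_n\|^2+b_n$ with $r<1$ coming from the exponential decay of $W(t)$, and close it with the discrete iteration lemma (Lemma \ref{lem2.2}); they absorb the quadratic term slightly differently, by writing $uu_x=(au)_x$ with $a=\tfrac12 u$ (small in $Y_T$ by Proposition \ref{3-1}) and invoking the decay theorem for the variable-coefficient linear problem (Theorem \ref{th2.8}), but that is a cosmetic repackaging of your recursion. Your remarks about restarting the evolution at $t=nT$ and about fixing $T$ large first and then shrinking $\delta$ match the choices actually made in the proofs of Theorem \ref{th2.8} and Proposition \ref{3-1}.

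There are, however, two genuine gaps. First, the key linear ingredient --- exponential decay of $W(t)$ --- cannot simply be ``cited from the linear theory underlying Theorem C'' (Theorem C is a local well-posedness statement with no decay content), and your fallback derivation via the multiplier $\int_0^L(1+x)u^2\,dx$ does not yield a strict dissipation inequality for general $L$: with the boundary conditions $u(0)=u_x(L)=u_{xx}(L)=0$ one finds
\begin{equation*}
\frac{d}{dt}\int_0^L(1+x)u^2\,dx=-3\int_0^L u_x^2\,dx+\int_0^L u^2\,dx-(1+L)u^2(L,t)-u_x^2(0,t),
\end{equation*}
and the positive term $\int_0^L u^2\,dx$ can be absorbed by $3\int_0^L u_x^2\,dx$ via Poincar\'e only when $L$ is small. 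This is precisely why the paper goes through the spectral analysis: Lemma \ref{noimag} (no spectrum on $i\mathbb{R}$), Lemma \ref{resolvent} ($\|R(i\omega,A)\|=O(|\omega|^{-2/3})$), and the frequency-domain stability criterion from \cite{Pazy} to obtain Proposition \ref{decay-1}. Second, your scheme is written as if the linear decay and the bilinear estimate were available in $Y^s$ for every $s$, but the paper establishes the decay only in $L^2$; the case $s=3$ is handled by differentiating in time ($v=u_t$ solves a linearized equation with boundary data $\vec{h}\,'$, and $u_{xxx}$ is then recovered from the equation itself), and $0<s<3$ by Tartar's nonlinear interpolation. Without one of these devices your argument proves the theorem only for $s=0$.
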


The study of the initial-boundary-value problems of the KdV equation
posed on the finite domain started as early as in 1979  by Bubnov
\cite{bub-1} and has been intensively studied in the past twenty
years for its well-posedness following the advances of the study of
the pure initial value problems of the KdV equation posed either on
the whole line $\mathbb{R}$ or on a torus $\mathbb{T}$. The
interested readers are referred to \cite{bub-1, bub-2, bsz-finite-1,
bsz-finite-2, col-ghida, Fam01, Fam07, holmer, kz, rkz} and the
references therein for an overall review for the well-posedness of
the IBVP of the KdV equation posed a finite domain and
\cite{bsz-half-1, bsz-half-2, ck, Fam83, Fam89, Fam99, Fam04,
holmer} for the IBVP of the KdV equation posed on the half line
$\mathbb{R}^+$.

 The paper is organized as follows.

 \smallskip
 ---- In section 2, we consider  the associated linear problem
 \begin{equation}\label{LP}
        \begin{cases}
        u_t+(au)_x+u_{xxx}=0,\qquad    x\in (0,L),  \ t >0,
        \\ u(x,0)=\phi(x), \\
        u(0,t)=h_1(t),\ u_x(L,x)=h_2(t),\ u_{xx}(L,t)=h_3(t).
        \end{cases}
 \end{equation}
 where $a=a(x,t)$ is a given function. Attention will be first turned to the situation that $a\equiv 0$ and  all boundary
 data $h_1, \ h_2$ and $h_3$ are zero:
 \begin{equation}\label{HLP}
        \begin{cases}
        u_t+u_x+u_{xxx}=0,\qquad    x\in (0,L),  \ t >0,
        \\ u(x,0)=\phi(x), \\
        u(0,t)=0,\ u_x(L,x)=0,\ u_{xx}(L,t)=0.
        \end{cases}
 \end{equation}
 It will be shown that the linear  KdV equation in (\ref{HLP}) behaves like
 a heat equation;
 \begin{itemize}
 \item[(i)] it possesses a remarkably strong smoothing property: for any
 $\phi \in L^2(0,L)$ , the corresponding solution $u(t)$ belongs to the
 space $H^{\infty} (0,L)$ for any $t>0$.
 \item[(ii)] its solution $u$ decays exponentially in the space $H^s
 (0,L)$ (for any given $s\geq 0$) as $t\to \infty$.
 \end{itemize}
These heat equation like properties of the IBVP (\ref{HLP}) enable
us to show that for any $s\geq 0$ there exists a $T>0$ such that if
$a\in X^s_T$ and  $\|a\|_{X_T^s} $ is small enough, then for any
$s-$compatible $(\phi , \vec{h})\in X^s_T$ with $G(t)=\|
\vec{h}\|_{B^s_{(t, t+T)}}$ decays exponentially,  the corresponding
solution $u$ of (\ref{LP}) also decays exponentially in the space
$H^s (0,L)$ as $t\to \infty $.

\medskip
---- In Section 3, the nonlinear IBVP (\ref{PP}) will be the focus
of our attention.  The proofs will be provided for both Theorem 1.1
and Theorem 1.2. As one can see from the proofs, the results
presented in Theorem 1.1 and Theorem 1.2 for the nonlinear IBVP
(\ref{PP}) are more or less small perturbation of the results
presented in Section 2 for the linear IBVP (\ref{LP}) and therefore
are  essentially linear results..

\medskip
---- The paper is ended with concluding remarks  given in Section
4. A comparison will be made between the IBVP (\ref{PP}) and the
following IBVP of the KdV equation posed on $(0,L)$:

\begin{equation}\label{others}
        \begin{cases}
        u_t+u_x+ uu_x+ u_{xxx}=0,\qquad    x\in (0,L),  \ t >0,
        \\ u(x,0)=\phi(x), \\
        u(0,t)=0,\ u(L,x)=0,\ u_{x}(L,t)=0.
        \end{cases}
 \end{equation}
We will see that, although there is just a slight difference between
the boundary conditions of the IBVP (\ref{PP}) and  the IBVP
(\ref{others}), there is a big difference between the global
well-posedness results for the IBVP (\ref{PP}) and the IBVP
(\ref{others}). While   only small amplitude solutions the IBVP
(\ref{PP}) exist globally, all solutions of the IBVP (\ref{others}),
large or small, exist globally instead.

\medskip
In addition,  the IBVP (\ref{PP}) will also be shown in this
section, to possess a time periodic solution $u^*$ if the boundary
forcing $\vec{h}$ is time periodic with small amplitude. Moreover,
this time periodic solution $u^*$ is locally exponentially stable.
 \section{Linear problems}
 \setcounter{equation}{0}

  In this section, consideration is first directed to the IBVP of linear KdV equation
  with
  homogeneous boundary conditions
  \begin{equation}\label{2.1}
  \begin{cases} u_t +u_x +u_{xxx}=0, \quad x\in (0,L),\  t>0,\\
  u(x,0)=\phi (x), \\ u(0,t)=u_x (L,t)=u_{xx} (L,t)=0 .\end{cases}
  \end{equation}
Its solution u can be written in the form
\[ u(x,t) = W(t) \phi \]
 where $W(t)$ is the $C^0$-semigroup in the space $L^2 (0,L)$ generated by the operator
   $$Ag:=-g'''-g'$$ with domain
  $$\D(A)=\{g\in H^3(0,L):  g(0)=g'(L)=g''(L)=0\}.$$
  The following estimate  can be found in \cite{kz}.
  \begin{prop}\label{pro2.1} Let $T>0$  be given. There exists a constant $C>0$
  depending only on $T$ such that for any $\phi \in L^2 (0,L)$,
  \[ \|u\|_{Y_{(0,T)}} \leq C_T \|\phi \|  .\]
  \end{prop}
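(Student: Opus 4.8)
\noindent\emph{Proof proposal.} The plan is a standard energy/multiplier argument for the linear KdV equation, carried out first for smooth data and then extended by density.

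First I would note that $W(t)$ is in fact a \emph{contraction} semigroup. For $g\in\D(A)$, integrating by parts and using the boundary conditions $g(0)=g'(L)=g''(L)=0$ gives
\[ \<Ag,g\>\;=\;-\tfrac12\,g(L)^2\;-\;\tfrac12\,g'(0)^2\;\le\;0, \]
so $A$ is dissipative; since $W(t)$ is already known to be a $C^0$-semigroup, this forces $\|W(t)\phi\|\le\|\phi\|$ for all $t\ge0$. In particular $\sup_{0\le t\le T}\|u(t)\|\le\|\phi\|$, which controls the $C([0,T];L^2(0,L))$ part of the $Y_{(0,T)}$-norm.

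It then remains to bound $\int_0^T\|u(t)\|_{H^1(0,L)}^2\,dt$. By density of $\D(A)$ (even of $\bigcap_{n\ge1}\D(A^n)$) in $L^2(0,L)$ together with the contraction bound just obtained, it is enough to prove the estimate for smooth $\phi$, for which all of the following computations are legitimate. Multiplying the equation in (\ref{2.1}) by $2u$ and integrating over $(0,L)$, the boundary conditions collapse all the boundary terms and leave
\[ \frac{d}{dt}\int_0^L u^2\,dx\;=\;-\,u(L,t)^2\;-\;u_x(0,t)^2; \]
integrating in $t$ gives the hidden trace estimate $\int_0^T\big(u(L,t)^2+u_x(0,t)^2\big)\,dt\le\|\phi\|^2$. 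To produce the $H^1$ gain I would then use the multiplier $(L-x)u$, the weight adapted to these boundary data (it vanishes at $x=L$, where $u_x=u_{xx}=0$ is prescribed, while $u$ itself vanishes at $x=0$). Multiplying by $(L-x)u$, integrating over $(0,L)$ and integrating by parts three times, every boundary contribution drops out except a multiple of $u_x(0,t)^2$, leaving an identity of the form
\[ \tfrac32\int_0^L u_x^2\,dx\;=\;\tfrac12\,\frac{d}{dt}\int_0^L(L-x)\,u^2\,dx\;+\;\tfrac12\int_0^L u^2\,dx\;+\;\tfrac{L}{2}\,u_x(0,t)^2 . \]
Integrating over $(0,T)$ and estimating the right-hand side by $\sup_{[0,T]}\|u(t)\|^2\le\|\phi\|^2$ (first term, using also $\int_0^L(L-x)\phi^2\,dx\le L\|\phi\|^2$), by $T\|\phi\|^2$ (second term) and by $\|\phi\|^2$ (third term, from the trace estimate) yields $\int_0^T\|u_x(t)\|^2\,dt\le C_T\|\phi\|^2$. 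Adding this to the $C([0,T];L^2)$ bound gives $\|u\|_{Y_{(0,T)}}\le C_T\|\phi\|$ for smooth $\phi$, and the general case follows by approximation.

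The integrations by parts and the density step are routine. The one point that genuinely needs care is the choice of multiplier in the last stage: one must verify that none of the boundary terms produced by the three integrations by parts involves an uncontrolled trace such as $u_{xx}(0,t)$ or a sign-indefinite product like $u(L,t)\,u_{xx}(L,t)$, and that the coefficient of $\int_0^L u_x^2$ comes out positive --- which is exactly what the weight $L-x$ arranges for these boundary conditions.
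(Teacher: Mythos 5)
Your argument is correct, but note that the paper itself offers no proof of Proposition \ref{pro2.1}: it simply cites \cite{kz}, where the estimate is obtained as part of the linear theory for the boundary integral operators (explicit representation formulas plus energy identities). Your proposal is a self-contained substitute via the classical Kato-smoothing multiplier argument, and the computations check out: multiplying by $(L-x)u$ and integrating by parts, the surviving boundary terms are $-\tfrac{L}{2}u^2(0,t)$, $-Lu(0,t)u_{xx}(0,t)$, $u(L,t)u_x(L,t)-u(0,t)u_x(0,t)$ and $\tfrac{L}{2}u_x^2(0,t)$, of which only the last is nonzero under $u(0,t)=u_x(L,t)=u_{xx}(L,t)=0$, and it is controlled by the trace identity $\int_0^T\bigl(u^2(L,t)+u_x^2(0,t)\bigr)\,dt\le\|\phi\|^2$; the coefficient $\tfrac32$ of $\int_0^L u_x^2\,dx$ is indeed positive. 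Your formula $\<Ag,g\>=-\tfrac12 g(L)^2-\tfrac12 g'(0)^2$ is the correct one (the display in Section 2 of the paper omits the $g(L)^2$ term, but the energy identity $\frac{d}{dt}\int_0^L u^2\,dx=-u^2(L,t)-u_x^2(0,t)$ stated there confirms your version), so the contraction property and the $C([0,T];L^2(0,L))$ bound follow by Lumer--Phillips. The density step is legitimate because the estimate is linear, so it passes to differences $W(t)(\phi_n-\phi_m)$ and hence to limits in $L^2(0,T;H^1(0,L))$. What your route buys is independence from the representation formulas of \cite{kz}; what it does not give is the corresponding bound for the nonhomogeneous boundary operator $W_b$ (Proposition \ref{pro2.9}), for which the machinery of \cite{kz,rkz} is still needed.
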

  Our main concern is its long time asymptotic behavior.  As it
  holds that, for any smooth solution $u$ of the IBVP (\ref{2.1}),
  \[\frac{d}{dt}\int ^L_0 u^2(x,t) dx = -u^2(L,t)-u^2_x(0,t), \]
  one  may wonder if its $L^2-$energy decays as $t\to \infty$.
  A detailed spectral analysis of the operator  $A$ is needed  for the investigation.

\medskip
 Note that both $A$  and its adjoint operator $A^*$ are
\emph{dissipative} operator.  Indeed,  the adjoint operator of $A$
is given by
  \[ A^* f=f' +f''' \]
  with the domain
  $$\D(A ^*)=\{f\in H^3(0,L):  f(0)=f'(0)=0, f(L)+ f''(L)=0\}.$$
  A direct calculation shows that
  \[ \int ^L_0 g(x) (Ag)(x)dx =-\frac12 (g'(0))^2, \qquad \int ^L_0
  f(x)(A^*f)(x) dx =-\frac12 (f'(L))^2 \]
  for any $g\in \D(A)$ and $f\in \D (A^*)$. Thus both $A$ and $A^*$
  are dissipative operators.

\begin{lem}\label{noimag} The spectrum  $\sigma (A)$ of $A$ consists
of all eigenvalues  $\{ \lambda _k\}^{\infty}_{1} $ with
\[ Re \, \lambda _k <0, \qquad k=1,2,3,\cdots \]
and
\begin{equation}\label{expansion}
\lambda_k = - \frac{8 \pi^3 k^3}{3 \sqrt{3}L^3}+ O(k^2) \quad \quad
\text{as} \quad k\to \infty
\end{equation}
\end{lem}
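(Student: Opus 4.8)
The plan is to find all eigenvalues of $A$ explicitly by solving the associated third-order ODE boundary value problem, then extract the asymptotic expansion from the resulting transcendental equation. First I would observe that $\lambda \in \sigma(A)$ with eigenfunction $g$ means $-g''' - g' = \lambda g$ with $g \in \D(A)$, i.e. $g$ solves the linear constant-coefficient ODE $g''' + g' + \lambda g = 0$ on $(0,L)$ subject to $g(0) = g'(L) = g''(L) = 0$. Since $A$ has compact resolvent (its resolvent maps $L^2$ into $H^3 \hookrightarrow\hookrightarrow L^2$), the spectrum is purely a discrete set of eigenvalues with no finite accumulation point, so it suffices to analyze this ODE. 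The characteristic equation is $\mu^3 + \mu + \lambda = 0$ with three roots $\mu_1, \mu_2, \mu_3$ satisfying $\mu_1 + \mu_2 + \mu_3 = 0$, $\mu_1\mu_2 + \mu_1\mu_3 + \mu_2\mu_3 = 1$, $\mu_1\mu_2\mu_3 = -\lambda$. The general solution is a combination $g(x) = \sum_j c_j e^{\mu_j x}$ (modified appropriately for repeated roots), and imposing the three boundary conditions gives a $3\times 3$ homogeneous linear system in $(c_1,c_2,c_3)$; $\lambda$ is an eigenvalue precisely when the determinant $\Delta(\lambda)$ of this system vanishes. Computing $\Delta(\lambda)$ and simplifying using the symmetric-function relations among the $\mu_j$ yields a characteristic function of the form
\[
\sum_{j} (\mu_{j+1} - \mu_{j+2})\, e^{\mu_j L} = 0
\]
(indices mod $3$), up to harmless nonvanishing factors.

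The claim $\mathrm{Re}\,\lambda_k < 0$ for every $k$ follows from dissipativity: we are told $\int_0^L g \overline{Ag}\, dx$ has nonpositive real part (the identity in the excerpt gives $\mathrm{Re}\langle Ag, g\rangle = -\tfrac12|g'(0)|^2 \le 0$ for real $g$, and the same computation with complex conjugates gives $\mathrm{Re}\langle Ag,g\rangle \le 0$ in general), so $\mathrm{Re}\,\lambda = \mathrm{Re}\langle Ag,g\rangle/\|g\|^2 \le 0$. To upgrade this to a strict inequality, I would argue that $\mathrm{Re}\,\lambda = 0$ would force $g'(0) = 0$; combined with the boundary conditions $g(0) = g'(L) = g''(L) = 0$ and the ODE, one then shows $g \equiv 0$ by a uniqueness argument (a purely imaginary eigenvalue $\lambda = i\beta$ would make $A$ non-injective on the imaginary axis, which one rules out either by the overdetermined ODE system or by noting $A^*$ is also dissipative so $0$ is not in the continuous/residual spectrum either — in fact $A$ is invertible, so $0 \notin \sigma(A)$, and a separate short computation handles $\beta \ne 0$).

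For the asymptotic expansion \eqref{expansion}, I would set $\lambda = -\nu^3$ or rather scale the roots: write the three roots of $\mu^3 + \mu + \lambda = 0$ for large $|\lambda|$ as perturbations of the cube roots of $-\lambda$. If $\lambda$ is large, two of the $\mu_j$ have large real parts of opposite sign and one is $O(1/|\lambda|^{?})$ — more precisely, writing $\mu_j \approx \omega^j (-\lambda)^{1/3}$ with $\omega = e^{2\pi i/3}$, the dominant balance in the characteristic equation $\sum_j (\mu_{j+1}-\mu_{j+2}) e^{\mu_j L} = 0$ comes from the terms whose exponents have the largest real part. Along the relevant spectral curve the three real parts $\mathrm{Re}(\mu_j L)$ must be comparable (otherwise one term dominates and the equation cannot vanish), which pins $(-\lambda)^{1/3}$ near the imaginary axis and forces a quantization condition of the form $\sqrt{3}\,(-\lambda)^{1/3} L/2 = \pi k + o(1)$, equivalently $(-\lambda_k)^{1/3} = \tfrac{2\pi k}{\sqrt{3}L}(1 + o(1))$, hence $\lambda_k = -\tfrac{8\pi^3 k^3}{3\sqrt{3}L^3} + O(k^2)$ after cubing and tracking the next-order correction from the exact transcendental equation. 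The main obstacle is this last step: carefully justifying the dominant-balance / zero-counting argument for the entire-function characteristic equation — i.e., showing every large eigenvalue is captured, that they are simple and indexed by $k$, and that the error term is genuinely $O(k^2)$ rather than merely $o(k^3)$. This requires a Rouché-type argument comparing the characteristic function to its leading exponential-sum approximation in suitable sectors, together with an implicit-function-theorem perturbation of the approximate roots, which is where the real work lies.
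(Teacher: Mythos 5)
Your proposal follows essentially the same route as the paper: dissipativity plus compactness of the resolvent to get a discrete spectrum in the closed left half-plane, the boundary-term identity (which yields $f'(0)=f(L)=0$ and hence $f\equiv 0$ by ODE uniqueness) to exclude purely imaginary eigenvalues, and an asymptotic dominant-balance analysis of the $3\times 3$ characteristic determinant leading to the quantization condition $e^{i\sqrt{3}\,\mu_1 L}=-1$ and the $k^3$ law. The only cosmetic difference is that the paper reduces to $Af=-f'''$ and cites Kato's perturbation theory for the $-f'$ term, whereas you perturb the roots of the full cubic $\mu^3+\mu+\lambda=0$ directly; the Rouch\'e-type justification you flag as the remaining work is indeed glossed over in the paper as well.
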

\begin{proof} Since $A$ is dissipative and  the resolvent operator
 $R(\lambda, A)= (\lambda I-A)^{-1}$ ($\lambda \in \rho (A)$) is compact,   $$\sigma (A)= \sigma _p (A)=\{ \lambda
 _k\}^{\infty}_1$$
 with $\lambda _k \to \infty $ as $ k\to \infty $ and
 \[ Re\, \lambda _k \leq 0, \quad k=1,2,3, \cdots .\]
 We thus need to show that
\[ Re\, \lambda _k \ne  0, \quad k=1,2,3, \cdots .\]
 Suppose $i\mu\in \sigma_p(A)$ for some real  number $\mu$. There
exists $0\ne f\in \D(A)$ such that
\begin{equation}\label{auxprob}
    \begin{cases}
     -f'''(x)-f'(x)=i\mu f(x) ,\quad   x\in(0,L)\\
     f(0)=f'(L)=f''(L)=0.
     \end{cases}
\end{equation}
Multiply both sides of the equation in (\ref{auxprob}) by $\bar{f}$
and integrate over $(0,L)$. Integration by parts leads to
\[i \mu\int ^L_0 |f(x)|^2 dx = -\frac12 |f'(0)|^2 -\frac12 |f(L)|^2
.\] Consequently,  either
\[ \mu =0, \quad f'(0)=f(L)=0 \]
or
\[ \int ^L_0 |f(x)|^2 dx=0, \quad f'(0)=f(L)=0.\]
Recall that $ f(0)=f'(L)=f''(L)=0$.  Therefore, in either cases, we
have $f\equiv 0$. That is a contradiction.

\medskip
 To show that  (\ref{expansion}), we may assume that $Af=-f'''$. The
 case of $Af=-f'''-f'$ follows from standard perturbation theory
 (cf. \cite{kato}).

 \smallskip
  Assuming that  $Re\,\lambda <0$. By symmetry, we only need to consider the case  that $Im \,\lambda \leq 0$.
  Denote the three cube roots of $-\lambda $ by $\mu _1, \ \mu_2 ,
  \mu _3$. These must have distinct real parts; let $\mu _1$ be the
  unique root such that $0\leq arg (\mu _1)\leq \pi /6$ and
  \[ \mu _2 =e^{\frac{2\pi i}{3}}\mu _1 :=\rho \mu _1, \qquad \mu _3
  =\rho ^2 \mu _1 .\]
  The  solution of
\begin{equation}\label{phi}
    \begin{cases}
    \lambda \phi(x) + \phi'''(x)=0, \quad  x\in (0,L),\\  \\
    \phi(L)=\phi'(L)=\phi''(L) =0
    \end{cases}
\end{equation}
is then given by
 $$\phi(x)= C_1 e^{\mu_1 x} + C_2 e^{\mu_2 x}+C_3 e^{\mu_3 x}$$ with  $C_1,C_2$ and $C_3$ satisfying

\begin{eqnarray*}
    C_1+C_2+C_3&=& 0,\\
     \mu_1 e^{\mu_1 L} C_1 +  \mu_2 e^{\mu_2 L} C_2 +  \mu_3 e^{\mu_3 L}C_3&=&0,\\
      \mu_1^2 e^{\mu_1L} C_1 +  \mu_2^2 e^{\mu_2L} C_2 +  \mu_3^2
      e^{\mu_3L}C_3 &=&0.
\end{eqnarray*}
Setting the determinant of the coefficient matrix equal to zero,
\[ e^{(\mu_2+\mu_3)L}\mu_2 \mu_3 (\mu_3-\mu_2)
 + e^{(\mu_1 +\mu _2)L} \mu_1  \mu_2 (\mu_2-\mu_1)+
 e^{(\mu_3+\mu_1)L
 }(\mu_1-\mu_3)\mu_3\mu_1=0 .\]
 By the assumptions, $Re\,\mu_1 > 0$,  $Re\,\mu_2 <0$ and $Re\,\mu_3 \leq
 0$. Furthermore,
 $$Re\,\mu _1) \to +\infty \quad and \quad Re\,\mu _2 \to -\infty \quad  as \quad \lambda
 \to \infty .$$
Neglecting the term $ e^{(\mu_2+\mu_3)L}\mu_2 \mu_3 (\mu_3-\mu_2)$,
which is very small for large $\lambda $, we arrive at the equation
  $$e^{(\mu _2 -\mu _3 )L}=\rho +\rho ^2,$$
  or
  \[ e^{i\sqrt{3}\mu _1L}=-1 .\]
  Therefore,
\begin{eqnarray*}
\mu_{1,k} \sim \frac{(1+2k)\pi}{\sqrt{3}L}.
\end{eqnarray*}
As $\lambda_k +\mu^3=0,$
\begin{gather*}
\lambda_k=-\mu _{1,k}^3=  - \left(  \frac{8 \pi^3
k^3}{3\sqrt{3}L^3}+ O(k^2)\right) \quad \text{as} \ k\to \infty.
\end{gather*}
\end{proof}

Next lemma gives an asymptotic estimate of the resolvent operator
$R(\lambda , A)$ on the pure imaginary axis.
\begin{lem}\label{resolvent}
$$\| R(iw,A)\| = O(|w|^{-2/3}), \quad  \text{as} \quad |w|\to \infty.$$
\end{lem}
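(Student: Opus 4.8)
The plan is to obtain the estimate $\|R(iw,A)\| = O(|w|^{-2/3})$ by solving the resolvent equation $(iwI - A)u = f$ explicitly and tracking the dependence on $w$. Fix $f\in L^2(0,L)$ and write $\lambda = iw$. We must solve $\lambda u + u''' + u' = f$ with $u(0)=u'(L)=u''(L)=0$. As in the proof of Lemma \ref{noimag}, it suffices (by standard perturbation theory, cf.\ \cite{kato}) to treat the principal part $\lambda u + u''' = f$, since the lower-order term $u'$ contributes a relatively compact perturbation that does not affect the leading asymptotics. Let $\mu_1,\mu_2,\mu_3$ be the three cube roots of $-\lambda$, ordered as in the previous proof so that for $\lambda = iw$ with $|w|$ large they are spread around the circle $|\mu_j| = |w|^{1/3}$ with distinct real parts, one with $\mathrm{Re}\,\mu_j$ of size $+c|w|^{1/3}$, one of size $-c|w|^{1/3}$, and one of size $O(|w|^{1/3})$ but smaller.

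The solution is then $u = u_p + \sum_j C_j e^{\mu_j x}$, where $u_p$ is a particular solution built from $f$ via the variation-of-parameters kernel (whose size is controlled by $|w|^{-2/3}$, since inverting a third-order operator at frequency $\lambda$ gains three derivatives' worth, i.e.\ a factor $|\lambda|^{-1}$ in the worst direction, but in $L^2\to L^2$ the gain from the oscillatory/decaying exponentials is $|w|^{-2/3}$), and the constants $C_1,C_2,C_3$ are determined by the three boundary conditions $u(0)=u'(L)=u''(L)=0$. This gives a $3\times 3$ linear system $M(\lambda)\,\vec{C} = \vec{b}(f)$, whose coefficient matrix $M(\lambda)$ is essentially the same determinant that appeared in Lemma \ref{noimag}. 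The key point is a quantitative lower bound: $|\det M(\lambda)|$ is bounded below by $c\,|w|^{4/3}e^{cL|w|^{1/3}}$ (the dominant entry being the one carrying $e^{\mu L}$ with $\mathrm{Re}\,\mu > 0$), which—combined with the fact that the right-hand side $\vec{b}(f)$ involves $u_p$ and its derivatives evaluated at the endpoints—forces $\sum_j C_j e^{\mu_j x}$ to be no larger in $L^2$ than $u_p$, up to the constant. The cancellation of the exponentially large factors between $\det M$ in the denominator and the off-diagonal cofactors in the numerator is what yields a bounded (indeed $O(|w|^{-2/3})$) contribution rather than a blow-up.

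The main obstacle is precisely this bookkeeping of exponentials: one has to show the boundary-matching constants $C_j$ do not spoil the $|w|^{-2/3}$ decay coming from $u_p$, which requires carefully pairing each cofactor of $M(\lambda)^{-1}$ against the corresponding exponential $e^{\mu_j x}$ so that the net exponential growth is nonpositive on $[0,L]$, and simultaneously extracting the correct polynomial power of $|w|$. A convenient way to organize this is to rescale $x$ and $\mu_j$ by $|w|^{1/3}$, reducing everything to a fixed, $w$-independent boundary-value problem on a long interval with a small parameter, for which uniform bounds follow by elementary estimates on the Green's function; the factor $|w|^{-2/3}$ then emerges from the Jacobian of the rescaling together with the order-three inversion. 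Once the solution operator is written this way, $\|R(iw,A)f\| \le C|w|^{-2/3}\|f\|$ follows, uniformly for $|w|$ large, and the lemma is proved.
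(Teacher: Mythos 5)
Your plan follows essentially the same route as the paper's proof: represent the resolvent explicitly through the Green's function (a variation-of-parameters particular part plus a $3\times 3$ boundary-matching system for the homogeneous correction), compute the asymptotics of the three roots of $s^{3}+s+\lambda=0$ for $\lambda=i\omega$, and show by determinant/cofactor bookkeeping that the boundary-correction coefficients are no larger than the $O(|\omega|^{-2/3})$ variation-of-parameters coefficients $\hat{c}_j\sim|\omega|^{-2/3}$, so the kernel is uniformly $O(|\omega|^{-2/3})$ and hence so is the operator norm. The only deviations are organizational --- the paper keeps the full cubic rather than perturbing off the principal part $u'''$, and computes the determinants $\Delta,\Delta_{j}$ directly rather than rescaling by $|\omega|^{1/3}$ --- together with one harmless slip: the dominant term of the determinant is $s_{1}s_{3}(s_{1}-s_{3})e^{(s_{1}+s_{3})L}\sim|\omega|\,e^{cL|\omega|^{1/3}}$, not $|\omega|^{4/3}e^{cL|\omega|^{1/3}}$.
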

\begin{proof}
Letting  $\lambda \in \rho(A)$ and $ f\in \2(0,L)$ and   defining
$w=(\lambda I - A)^{-1}f$. In other words,  $w$ satisfies
\begin{equation}\label{wprob}
    \begin{cases}
     \lambda w + w'''+w'= f,\\
     w(0)=w'(L)=w''(L)=0.
    \end{cases}
\end{equation}
Its  solution is given by
$$w(y,\lambda)= \int_0^L G(y,\xi; \lambda) f(\xi) d\xi$$
where $G(y,\xi; \lambda)$,  the Green function of (\ref{wprob}),
solves
\begin{equation}\label{green}
    \begin{cases}
    G'''(y,\xi; \lambda) + G'(y,\xi;\lambda) + \lambda G(y,\xi;\lambda)=\delta (y-\xi)\\
    G(0,\xi;\lambda)=G'(L,\xi;\lambda)=G''(L,\xi,;\lambda)=0,
    \end{cases}
\end{equation}
the prime notation  representing $\frac{d}{dy}$.  With $s_j, \
j=1,2,3,$ being the solutions of \[ s^3+s+\lambda =0,\]
$G(y,\xi;\lambda)$ has the form
$$G(y,\xi;\lambda)= \sum_{j=1}^3 c_j e^{s_j (y-\xi)} + H(y-\xi) \left(\sum_{j=1}^3
\hat{c}_j e^{s_j (y-\xi)} \right)$$
with
\begin{equation*}
    H(x)= \begin{cases}  1& \ if \ x>0,\\
    0&  \ if \ x\le 0, \end{cases}
\end{equation*}
the coefficients $\hat{c}_1, \ \hat{c}_2$ and $\hat{c}_3$ satisfying
\begin{eqnarray*}
\hat{c}_1 + \hat{c}_2 +\hat{c}_3 &=& 0,\\
\hat{c}_1 s_1 + \hat{c}_2 s_2 +\hat{c}_3 s_3&=& 0,\\
\hat{c}_1 s_1^2 + \hat{c}_2 s_2^2+\hat{c}_3  s_3^2&=& 1\\
\end{eqnarray*}

and the coefficients  $c_1, \ c_2, \ c_3 $ solving
   \begin{equation} \label{system}\left(
        \begin{array}{ccc}
    1 & 1 & 1 \\
    s_1 e^{s_1L} & s_2 e^{s_2L} &  s_3 e^{s_3L}\\
    s_1^2 e^{s_1L} & s_2^2 e^{s_2L} &  s_3^2 e^{s_3L}\\
         \end{array} \right)
         \left(
         \begin{array}{c}
         c_1 e^{-s_1\xi}\\
         c_2 e^{-s_2 \xi}\\
         c_3 e^{-s_3 \xi}
         \end{array}\right)=
         \left(
         \begin{array}{c}
         0\\
         d_2\\
         d_3
         \end{array}
         \right)
    \end{equation}
where
\begin{gather}\label{d2d3}
    d_2=-\left( \hat{c}_1s_1 e^{s_1(L-\xi)} + \hat{c}_2s_2 e^{s_2(L-\xi)}+\hat{c}_3s_3 e^{s_3(L-\xi)}\right),\\
    d_3=-\left( \hat{c}_1s_1^2 e^{s_1(L-\xi)} + \hat{c}_2s_2^2 e^{s_2(L-\xi)}+\hat{c}_3s_3^2
    e^{s_3(L-\xi)}\right).
\end{gather}
A direct computation  shows  that
$$\hat{c}_1=\frac{1}{(s_2-s_1)(s_3-s_1)}, \quad
\hat{c_2}=\frac{1}{(s_1-s_2)(s_3-s_2)} \quad  \text{and} \quad
\hat{c_3}=\frac{1}{(s_1-s_3)(s_2-s_3)}$$ and
\[ c_1 = e^{s_1\xi}\frac{\Delta _1}{\Delta},\quad c_2 = e^{s_2\xi}\frac{\Delta
_2}{\Delta},\quad c_3 = e^{s_3\xi}\frac{\Delta _3}{\Delta},\] where
$\Delta$ is the determinant of the coefficients matrix  $A$ of
system (\ref{system}),
$$\Delta = e^{s_2 +s_3} s_2 s_3 (s_3-s_2) + e^{s_1 +s_3}s_3 s_1 (s_1-s_3) + e^{s_1+s_2}s_1s_2(s_2-s_1,)$$
and $\Delta_j$ be the determinant of the matrix $A$  with the
$j-$column  replacing by the vector $(0,d_2,d_3)^T$
 for $j=1,2,3$.
Recall that  $ s_j$ ( $j=1,2,3$) is the solution of
$$s^3+s+\lambda=0.$$ If we let  $\lambda=i\omega$, then $ s_j=i \mu _j$ with $\mu _j$
solves
$$\mu^3-\mu -\omega=0$$ and therefore,
\begin{equation*}
 \mu_1=\alpha +\beta,\quad
 \mu_2=\rho ^2 \alpha +\rho \beta,\quad  \mu_3=\rho \alpha +\rho ^2\beta,
\end{equation*}
with $\rho =e^{\frac23 \pi}$,
\[ \alpha =\left (\frac{\omega}{2}+\sqrt{d}\right )^{\frac13}, \qquad \beta =\left (\frac{\omega}{2}-\sqrt{d}\right
)^{\frac13}\] where
\[ d=\frac{\omega ^2}{4}-\frac{1}{27} .\]

Thus, as $\omega ^{\frac13}:=b \to \infty $,
\begin{eqnarray*}
\mu_1&=& b + \frac13 b^{-1} +O( b^{-5}),\\
\mu_2&=&(-\frac12+\frac{\sqrt{3}}{2}i) b -  \frac13(\frac12+\frac{\sqrt{3}}{2}i) b^{-1}+O( b^{-5}),\\
\mu_3&=&(-\frac12-\frac{\sqrt{3}}{2}i) b -  \frac13 (\frac12-\frac{\sqrt{3}}{2}i)b^{-1}+O( b^{-5}).\\
\end{eqnarray*}
As a result, as $b\to \infty $,
\begin{eqnarray*}
s_1&=& i \mu_1=  ib + O(b^{-1}),\\
s_2&=& i \mu_2=  (-\frac12i-\frac{\sqrt{3}}{2})b + O(b^{-1}),\\
s_3&=& i \mu_2=  (-\frac12i+\frac{\sqrt{3}}{2})b + O(b^{-1}),
\end{eqnarray*}
and, asymptotically, as $b\to \infty $,
\begin{gather*}
d_2\sim -s_3  \hat{c}_3 e^{s_3 (L- \xi)}, \\
d_3\sim-s_3^2  \hat{c}_3e ^{s_3(L-  \xi)}.
\end{gather*}
It follows similarly, as $b\to \infty $,
\begin{eqnarray*}
\Delta &\sim&  s_3 s_1 (s_1-s_3) e^{(s_1+s_3)L},\\
\Delta_1
  &\sim & -s_3  \hat{c}_3 e^{-s_3 (L- \xi)}
\left| \begin{array}{ccc}
 0 & 1 & 1\\
1 &0  & 0\\
s_3 & 0 & 0
 \end{array} \right| \\
    &= 0,\\
\Delta_2&\sim & \left| \begin{array}{ccc}
    1& 0 & 1\\
    s_1 e^{s_1L} & -s_3  \hat{c}_3 e^{s_3 (L- \xi)} & s_3 e^{s_3L}\\
    s_1^2e^{s_1L}&-s_3^2  \hat{c}_3e ^{s_3(L-  \xi)}&s_3^2 e^{s_3L}
\end{array} \right|\\
&\sim& -\hat{c}_3 s_3 e^{s_3(L-\xi)} \left|
\begin{array}{ccc}
1 & 0 &1\\
0& 1 & 0\\
e^{s_1L}s_1(s_1-s_3) & s_3& 0
\end{array}
\right|\\
&\sim& \hat{c}_3 s_3 s_1 (s_1-s_3)e^{s_3(L-\xi)+s_1},\\
\Delta_3 &\sim & \left| \begin{array}{ccc}
    1& 1 & 0\\
    s_1 e^{s_1L} & s_2 e^{s_2L} &  -s_3  \hat{c}_3 e^{s_3 (L- \xi)} \\
    s_1^2e^{s_1L}&s_2^2 e^{s_2L}&-s_3^2  \hat{c}_3e ^{s_3(L-  \xi)}
\end{array} \right|\\
&\sim& -\hat{c}_3 s_3 e^{s_3(L-\xi)} \left|
\begin{array}{ccc}
1& 1 & 0\\
0&0&1\\
s_1 e^{s_1L}(s_1-s_3) &  0& s_3
\end{array}
\right|\\
&\sim& -\hat{c}_3 s_3 e^{s_3(L-\xi)} s_1 e^{s_1L}(s_1-s_3).
\end{eqnarray*}

Hence,  as $b\to \infty $,
\begin{equation*}
    c_1\sim 0, \quad \quad c_2\sim \hat{c}_3 e^{(s_2 - s_3) \xi}, \quad  \quad  c_3\sim -\hat{c}_3.
\end{equation*}

Plugging these coefficients in the definition of the Green's
function for IBVP (\ref{green}) and considering the case where $y\le
\xi$,
\begin{eqnarray*}
G(y,\xi; i\omega) &=& c_1 e^{s_1(y-\xi)}+c_2 e^{s_2(y-\xi)}+c_3 e^{s_3(y-\xi)}\\
&=& \hat{c}_3 e^{-s_3\xi +s_2 y}-\hat{c}_3 e^{s_3(y-\xi)}\\
&\sim& \hat{c}_3 e^{\frac{i}2 b(\xi-y) - \frac{\sqrt{3}}2 b(\xi+y) }
-  \hat{c}_3 e^{(y-\xi)(\frac{-i}{2}+ \frac{\sqrt{3}}2)b}.
\end{eqnarray*}
Therefore, since $\hat{c}_1,\hat{c}_2,\hat{c}_3 \sim b^{-2}$

\begin{eqnarray*}
\left| G(y,\xi;ib^3) \right| &\le& \hat{c}_3 M_N\\
&\le& b^{-2}M_N\\
&\le& w^{-2/3} M_N
\end{eqnarray*}

for  $\{ e^{-s_3 \xi + s_2 y}, e^{s_3(y-\xi)}\} \le M_N$ for $b>N$.  Now, considering the cases where $ y>\xi$,

\begin{eqnarray*}
G(y,\xi;i\omega) &=& c_1 e^{s_1(y-\xi)} +c_2 e^{s_2(y-\xi)}+c_3
e^{s_3(y-\xi)} + \hat{c}_1 e^{s_1(y-\xi)}
 +\hat{c}_2 e^{s_2(y-\xi)}+\hat{c}_3 e^{s_3(y-\xi)}\\
&\approx & \hat{c}_3 e^{-s_3 \xi + s_2 y}+ \hat{c}_1 e^{x_1(y-\xi)} + \hat{c}_2 e^{s_2(y-\xi)}\\
&\approx &\hat{c}_1 e^{s_1(y-\xi)} +\hat{c}_2 e^{s_2(y-\xi)}+ \hat{c}_3 e^{-\frac{\sqrt{3}}2 b (\xi+y)} \\
&\approx & M_N b^{-2}.
\end{eqnarray*}

Since $\{  e^{s_1(y-\xi)} ,   e^{s_2(y-\xi)}, e^{-\frac{\sqrt{3}}2 b (\xi+y)}\} \le M_N$ as $b>N$,  so

$$| G(y, \xi, ib^3)| \le M_N b^{-2} \quad \text{as}  \quad b>N,$$

and we can conclude in general that $\forall (y,\xi)$
 $$| G(y, \xi, ib^3)| \le M_N b^{-2} \quad \text{as}  \quad b>N.$$
Notice that if we select $\lam = -i \omega$, the computations are
similar and we get the same asymptotic behavior
 for the Green's function (\ref{green}).\\
\end{proof}

The following estimate then follows from Lemma \ref{noimag}, Lemma
\ref{resolvent} and \cite{Pazy}.
\begin{prop}\label{decay-1}  There exists a $\gamma >0$ such that  for any $\phi
\in L^2 (0,L)$, $$u(t)=W(t)\phi \in H^{\infty} (0,L) \quad  for \
any \ t >0$$ and
\[ \| u(t)\|  \leq Ce^{-\gamma t} \| \phi \|
\quad for \ all \ t\geq 0 .\]
\end{prop}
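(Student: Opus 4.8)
The statement contains two essentially independent assertions --- the exponential decay $\|u(t)\|\le Ce^{-\gamma t}\|\phi\|$ and the instantaneous smoothing $u(t)=W(t)\phi\in H^{\infty}(0,L)$ for $t>0$ --- and I would derive both from the spectral information of Lemma~\ref{noimag}, the resolvent bound of Lemma~\ref{resolvent}, and classical semigroup theory \cite{Pazy}, with no further input. Recall first that $A$ and $A^{*}$ are dissipative and that $A$ generates a $C_{0}$-semigroup, so $W(t)$ is in fact a $C_{0}$-semigroup of contractions on $L^{2}(0,L)$; in particular $\|W(t)\|\le1$ for all $t\ge0$.

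\emph{Exponential decay.} Here I would invoke the Hilbert-space resolvent criterion for uniform exponential stability (the Gearhart--Pr\"uss theorem; see \cite{Pazy}): a bounded $C_{0}$-semigroup on a Hilbert space decays exponentially provided the imaginary axis lies in the resolvent set of the generator and the resolvent is uniformly bounded there. By Lemma~\ref{noimag}, $\sigma(A)=\sigma_{p}(A)=\{\lambda_{k}\}$ with $\operatorname{Re}\lambda_{k}<0$, so $i\r\subset\rho(A)$; the map $w\mapsto R(iw,A)$ is analytic, hence bounded on any compact segment $\{iw:|w|\le w_{0}\}$, and Lemma~\ref{resolvent} gives $\|R(iw,A)\|=O(|w|^{-2/3})\to0$ as $|w|\to\infty$, so $\sup_{w\in\r}\|R(iw,A)\|<\infty$. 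The criterion then furnishes $C>0$ and $\gamma>0$ with $\|W(t)\|\le Ce^{-\gamma t}$, whence $\|u(t)\|=\|W(t)\phi\|\le Ce^{-\gamma t}\|\phi\|$. Moreover, since (as noted below) $W(t)$ is eventually norm-continuous, the spectral mapping theorem applies and $\gamma$ may be taken to be any number in $\bigl(0,-\sup_{k}\operatorname{Re}\lambda_{k}\bigr)$, the supremum being attained and strictly negative because $R(\lambda,A)$ is compact and no $\lambda_{k}$ is purely imaginary.

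\emph{Smoothing.} I would prove that $W(t)$ is an \emph{immediately differentiable} semigroup, so that $W(t)\phi\in\bigcap_{n\ge1}\D(A^{n})$ for every $t>0$; since $\D(A)\subset H^{3}(0,L)$ and a routine bootstrap on the identity $Ag=-g'''-g'$ gives $\bigcap_{n\ge1}\D(A^{n})\subset H^{\infty}(0,L)$, this yields $W(t)\phi\in H^{\infty}(0,L)$. To apply the differentiable-semigroup criterion of \cite{Pazy} I need $\rho(A)$ to contain, for every $b>0$, a logarithmic region $\{\lambda:\operatorname{Re}\lambda\ge a_{b}-b\log(1+|\operatorname{Im}\lambda|)\}$ on which $\|R(\lambda,A)\|$ grows at most polynomially in $|\operatorname{Im}\lambda|$. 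I would produce this by a Neumann-series perturbation of Lemma~\ref{resolvent} off the imaginary axis: writing $\lambda I-A=(iwI-A)\bigl(I+(\operatorname{Re}\lambda)\,R(iw,A)\bigr)$ with $w=\operatorname{Im}\lambda$, the right-hand factor is invertible whenever $|\operatorname{Re}\lambda|\,\|R(iw,A)\|<\tfrac12$, so the bound $\|R(iw,A)\|\le C|w|^{-2/3}$ for $|w|\ge w_{0}$ shows that the cusp-shaped set $\{\lambda:|\operatorname{Re}\lambda|<\tfrac{1}{2C}|\operatorname{Im}\lambda|^{2/3},\ |\operatorname{Im}\lambda|\ge w_{0}\}$ lies in $\rho(A)$ with $\|R(\lambda,A)\|\le2C|\operatorname{Im}\lambda|^{-2/3}$ there; together with a bounded neighborhood of the segment $\{iw:|w|\le w_{0}\}$, this set contains every logarithmic region for $|\operatorname{Im}\lambda|$ large and carries a uniformly bounded resolvent, so $W(t)$ is infinitely differentiable for $t>0$. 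The mechanism behind this --- and a direct way to display both the smoothing and the decay at once --- is to shift the Bromwich contour in $W(t)=\frac{1}{2\pi i}\int_{\Gamma}e^{\lambda t}R(\lambda,A)\,d\lambda$ onto the curve $\operatorname{Re}\lambda=-c|\operatorname{Im}\lambda|^{2/3}$, closed near the origin to the right of all $\lambda_{k}$: on that curve $|e^{\lambda t}|=e^{-ct|\operatorname{Im}\lambda|^{2/3}}$ decays faster than any power of $|\operatorname{Im}\lambda|$, so $\int_{\Gamma}|\lambda|^{n}|e^{\lambda t}|\,\|R(\lambda,A)\|\,|d\lambda|<\infty$ for every $n$, which says exactly that $W(t)$ maps $L^{2}(0,L)$ boundedly into $\D(A^{n})$ for all $n$.

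The routine ingredients --- dissipativity and compact resolvent of $A$, analyticity of $R(\cdot,A)$ on $\rho(A)$, the elliptic bootstrap $\bigcap_{n}\D(A^{n})\subset H^{\infty}(0,L)$, and the fact that $-\partial_{x}$ enters only as a relatively compact perturbation (exactly as in the proof of Lemma~\ref{noimag}) and leaves all the above estimates intact --- I would dispatch briefly. The one genuinely substantive step, and the place I expect the real work, is the passage from the single resolvent estimate on $i\r$ in Lemma~\ref{resolvent} to a resolvent estimate on a cusp (hence logarithmic) neighborhood of the imaginary axis, together with the recognition that this is precisely the hypothesis demanded by the exponential-stability theorem and the differentiable-semigroup theorem in \cite{Pazy}; once that is in place, the decay rate $\gamma$ and the $H^{\infty}$ smoothing drop out of these two classical theorems simultaneously.
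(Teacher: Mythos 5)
Your proposal is correct and follows exactly the route the paper intends: the paper gives no written proof of this proposition, saying only that it ``follows from Lemma 2.2, Lemma 2.4 and [Pazy]'', and your argument is precisely the careful execution of that citation --- Gearhart--Pr\"uss (or, equivalently, eventual norm-continuity plus the spectral mapping theorem together with $\sup_k \operatorname{Re}\lambda_k<0$) for the exponential decay, and the differentiable-semigroup criterion for the $H^{\infty}$ smoothing. If anything, you supply the one genuinely nontrivial step the paper leaves entirely implicit, namely the Neumann-series extension of the resolvent bound from $i\mathbb{R}$ to a cusp-shaped (hence logarithmic) neighborhood of the imaginary axis.
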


Combining Propositions \ref{pro2.1} and \ref{decay-1} gives us
\begin{thm}\label{decay-2}
There exists a $\gamma >0$ such that for  $T>0 $ there exists a
constant $C=C_T >0$ such that \[ \| u\|_{Y_{(t, t+T)}} \leq C_T
\|\phi \| e^{-\gamma t} \ for \ all \ t\geq 0.\]
\end{thm}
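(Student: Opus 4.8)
The plan is to combine the short-time smoothing/bound from Proposition \ref{pro2.1} with the exponential-decay estimate from Proposition \ref{decay-1} via a semigroup/splitting argument over successive time windows of length $T$. Fix $T>0$. By the semigroup property, for any $t\geq 0$ the solution on the window $[t,t+T]$ is $u(\cdot+t)=W(\cdot)\,u(t)$ with $u(t)=W(t)\phi$. Thus $\|u\|_{Y_{(t,t+T)}}$ is exactly the $Y_{(0,T)}$-norm of the solution with initial datum $u(t)$, and Proposition \ref{pro2.1} gives
\[
\|u\|_{Y_{(t,t+T)}} \leq C_T\,\|u(t)\| = C_T\,\|W(t)\phi\|.
\]
Then Proposition \ref{decay-1} supplies $\gamma>0$ and $C>0$ with $\|W(t)\phi\|\leq C e^{-\gamma t}\|\phi\|$ for all $t\geq 0$. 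Chaining the two inequalities yields $\|u\|_{Y_{(t,t+T)}}\leq C_T C\, e^{-\gamma t}\|\phi\|$, which is the claimed bound after renaming the constant. I would spell out explicitly that the $Y_{(0,T)}$-estimate is time-translation invariant for the autonomous equation \eqref{2.1}, since that is the only nontrivial structural point being invoked.

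A minor technical wrinkle is that Proposition \ref{decay-1} as stated only gives the pointwise-in-time bound on $\|u(t)\|=\|u(t)\|_{L^2(0,L)}$, whereas $Y_{(t,t+T)}$ also involves the $L^2(t,t+T;H^1(0,L))$ piece; but this is handled automatically, because Proposition \ref{pro2.1} already controls the full $Y_{(0,T)}$-norm — including the $H^1$-in-space integral norm — by the $L^2(0,L)$-norm of the initial datum alone. So no separate smoothing estimate on each window is needed; the single application of Proposition \ref{pro2.1} with initial datum $u(t)$ does all the work, and decay of $\|u(t)\|$ then propagates to the whole $Y$-norm on the shifted window.

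Honestly, there is no real obstacle here: the statement is a formal corollary, and the substance was already done in Lemmas \ref{noimag} and \ref{resolvent} (no spectrum on the imaginary axis, together with the $O(|w|^{-2/3})$ resolvent decay), which feed the Gearhart–Prüss / Pazy-type criterion \cite{Pazy} that underlies Proposition \ref{decay-1}. The only place one must be slightly careful is the uniformity of the constant $C_T$ in Proposition \ref{pro2.1} with respect to the base time $t$: because the linear problem \eqref{2.1} is autonomous, $C_T$ depends on $T$ only and not on $t$, so taking the supremum over $t\geq 0$ in the definition of $\|\cdot\|_{Y^0_T}$ is legitimate. With that observed, the proof is complete in a couple of lines.

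\begin{proof}
Let $T>0$ be given, and let $\gamma>0$ and $C>0$ be as in Proposition \ref{decay-1}, so that $\|W(t)\phi\|\leq C e^{-\gamma t}\|\phi\|$ for all $t\geq 0$ and all $\phi\in L^2(0,L)$. Fix $t\geq 0$. Since the IBVP \eqref{2.1} is autonomous, the semigroup property gives $u(x,t+\tau)=\bigl(W(\tau)u(t)\bigr)(x)$ for $\tau\in[0,T]$, where $u(t)=W(t)\phi\in L^2(0,L)$. Hence the restriction of $u$ to the window $[t,t+T]$ is the solution of \eqref{2.1} on $[0,T]$ with initial datum $u(t)$, and Proposition \ref{pro2.1} (whose constant depends on $T$ alone) yields
\[
\|u\|_{Y_{(t,t+T)}} \leq C_T\,\|u(t)\| = C_T\,\|W(t)\phi\| \leq C_T\, C\, e^{-\gamma t}\,\|\phi\|.
\]
Setting $\tilde C_T:=C_T\,C$ gives $\|u\|_{Y_{(t,t+T)}}\leq \tilde C_T\,\|\phi\|\,e^{-\gamma t}$ for all $t\geq 0$, which is the assertion (after relabeling $\tilde C_T$ as $C_T$).
\end{proof}
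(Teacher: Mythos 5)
Your proof is correct and is exactly the argument the paper intends: the paper derives Theorem \ref{decay-2} by "combining Propositions \ref{pro2.1} and \ref{decay-1}," which is precisely your chaining of the translation-invariant $Y_{(0,T)}$-estimate with the $L^2$ exponential decay of $W(t)\phi$. No further comment is needed.
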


Now we turn to consider the IBVP of the KdV equation with the
nonhomogeneous boundary conditions.

  \begin{equation}\label{HP}
    \begin{cases}
    u_t+u_x+u_{xxx}=0, \qquad x\in (0,L),\\
    u(x,0)=\phi(x), \\
     u(0,t)=h_1, \quad u_x(L,t)=h_2,  \quad u_{xx}(L,t)=h_3.
    \end{cases}
\end{equation}
Its solution can be written as \[ u(x,t)= W(t) \phi + W_b (t)\vec{h}
\]
where $\vec{h} = (h_1,h_2, h_3)$ and $W_b (t)$ is the boundary
integral operator associated to the IBVP (\ref{HP}) whose explicit
representation formula can be found in \cite{rkz}. The following
estimate is  from \cite{kz,rkz}.
\begin{prop}\label{pro2.9} Let $T>0$ be given. There exists a constant $C=C_T$
depending only on $T$ such that for any $\vec{h}\in B_{0,T}$ and
$\phi \in L^2 (0,L)$, then the IBVP (\ref{HP}) admits a unique
solution $u\in Y_{(0,T)}$ and, moreover,
\begin{equation}\label{2.2}
\|u\|_{Y_{(0,T)}} \leq C_T (\|\phi \| + \| \vec{h}\|_{B_{(0,T)}})
.\end{equation}
\end{prop}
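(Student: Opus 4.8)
The plan is to exploit linear superposition: since the solution of (\ref{HP}) is $u = W(t)\phi + W_b(t)\vec h$ and Proposition \ref{pro2.1} already supplies $\|W(t)\phi\|_{Y_{(0,T)}}\le C_T\|\phi\|$, it suffices to prove the boundary estimate $\|W_b(t)\vec h\|_{Y_{(0,T)}}\le C_T\|\vec h\|_{B_{(0,T)}}$ and to establish uniqueness. Uniqueness is immediate: the difference $w$ of two $Y_{(0,T)}$-solutions with the same data solves (\ref{2.1}) with $\phi=0$, and after a routine regularization the energy identity $\frac{d}{dt}\int_0^L w^2\,dx=-w^2(L,t)-w_x^2(0,t)\le 0$ recorded above forces $w\equiv 0$. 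Thus the whole content is the linear bound for $W_b$.

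By linearity I would split $\vec h$ into $(h_1,0,0)$, $(0,h_2,0)$, $(0,0,h_3)$ and treat each separately, and, after extending each $h_j$ off $(0,T)$ to $(0,\infty)$ with comparable norm, take the Laplace transform in $t$ (the initial datum being zero). This converts the equation into $\lambda\hat u+\hat u_x+\hat u_{xxx}=0$, whose solution space is spanned by $e^{s_j(\lambda)x}$ with $s^3+s+\lambda=0$ — precisely the characteristic roots already analysed in the proofs of Lemma \ref{noimag} and Lemma \ref{resolvent}. Imposing $\hat u(0)=\hat h_1$, $\hat u_x(L)=\hat h_2$, $\hat u_{xx}(L)=\hat h_3$ yields $\hat u(x,\lambda)$ as an explicit ratio of $3\times3$ determinants, and inverting the transform along a contour $\Gamma$ that — thanks to the spectral gap of Lemma \ref{noimag} — can be taken to be the imaginary axis gives the representation of $W_b(t)\vec h$ recorded in \cite{rkz}.

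The estimates then come from a frequency decomposition of this integral. On the bounded part $|\lambda|\le R$ the integrand is analytic in $\lambda$ and smooth in $x$, so that piece contributes harmlessly to both $C([0,T];L^2(0,L))$ and $L^2(0,T;H^1(0,L))$. On the unbounded part one substitutes the root asymptotics from Lemma \ref{resolvent}: for $\lambda=i\omega$, one of the three exponents is essentially imaginary while the other two have real parts tending to $+\infty$ and $-\infty$ like $|\omega|^{1/3}$, so the growing exponentials are killed by the $e^{s_j L}$ factors and what remains decays in powers of $|\omega|^{1/3}$. Since the $B_{(0,T)}$-norm weights $h_1,h_2,h_3$ by $|\omega|^{1/3}$, $1$, and $|\omega|^{-1/3}$ respectively, and since $\partial_x$ on $e^{s_j x}$ produces a factor $s_j\sim|\omega|^{1/3}$, the powers match: Plancherel in $t$ converts the resulting frequency bounds into control of $\sup_{x\in(0,L)}\|u(x,\cdot)\|_{L^2(0,T)}$, hence of $\|u\|_{C([0,T];L^2(0,L))}$, and the one extra $x$-derivative needed for the Kato smoothing $u\in L^2(0,T;H^1(0,L))$ is absorbed with no room lost. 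Summing the three contributions gives (\ref{2.2}).

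The main obstacle is making this high-frequency analysis of the determinant quotient rigorous. One must show $\Delta(\lambda)$ in the denominator is bounded below along $\Gamma$ for $|\lambda|$ large — which is exactly the information encoded in the resolvent asymptotics of Lemma \ref{resolvent} and the absence of spectrum on $i\mathbb{R}$ from Lemma \ref{noimag} — and one must track the several exponential factors $e^{s_j x}$, $e^{s_j(L-\xi)}$ carefully enough to see that every numerator term is either exponentially small or decays in $|\omega|^{1/3}$ at precisely the rate that cancels the frequency weight. The delicate case is $h_3$, which lives in the negative-index space $H^{-1/3}(0,T)$ and so has the least margin, requiring the sharpest form of the kernel bound (or, alternatively, a duality argument pairing against $H^{1/3}(0,T)$). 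Once these sharp kernel estimates are in place, assembling the $Y_{(0,T)}$-norm and combining with Proposition \ref{pro2.1} and the uniqueness argument finishes the proof; this statement is also the $s=0$ case of Theorem C of \cite{kz,rkz}.
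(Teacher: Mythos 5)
The paper itself offers no proof of this proposition: it is quoted directly from \cite{kz,rkz} (``The following estimate is from \cite{kz,rkz}''), so there is nothing internal to compare against. Your strategy --- superpose $W(t)\phi$ and $W_b(t)\vec h$, handle uniqueness by the energy identity, and estimate $W_b$ by Laplace transform in $t$, solving $\lambda\hat u+\hat u_x+\hat u_{xxx}=0$ with the three boundary traces prescribed, writing $\hat u$ as a quotient of determinants in the roots of $s^3+s+\lambda=0$, and inverting along the imaginary axis (legitimate because Lemma \ref{noimag} puts all zeros of the characteristic determinant strictly in the left half-plane) --- is precisely the method of the cited references, going back to Bona--Sun--Zhang. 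The identification of the frequency weights $|\omega|^{1/3},1,|\omega|^{-1/3}$ for $h_1,h_2,h_3$ and of $h_3\in H^{-1/3}(0,T)$ as the tight case is also correct.

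Two steps of the sketch do not hold up as written, and they are exactly where the content of the estimate lives. First, $\sup_{x\in(0,L)}\|u(x,\cdot)\|_{L^2(0,T)}$ does not control $\|u\|_{C([0,T];L^2(0,L))}$; these mixed norms are taken in opposite orders and neither dominates the other. The $C_tL^2_x$ bound requires a separate argument: for fixed $t$ one estimates $\|[W_b(t)\vec h](\cdot)\|_{L^2(0,L)}$ via a change of variables $\omega\mapsto\mu$ (with $\mu^3-\mu=\omega$, $d\omega\sim\mu^2d\mu$) and Plancherel/Minkowski in the new variable, uniformly in $t$. Second, the phrase ``the growing exponentials are killed by the $e^{s_jL}$ factors and what remains decays in powers of $|\omega|^{1/3}$'' only covers the two roots with $\mathrm{Re}\,s_j\sim\pm|\omega|^{1/3}$. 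The third root is purely oscillatory to leading order, $s_1\approx i\omega^{1/3}$, so $e^{s_1x}$ neither grows nor decays in $x$; its contribution is an oscillatory integral whose $Y_{(0,T)}$ bound comes from the Jacobian $\mu^2\,d\mu$ absorbing the frequency weight, not from exponential damping. Both gaps are repairable by the standard machinery of \cite{kz,rkz}, but as stated the power counting in your third paragraph does not yet close the argument.
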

Note that the estimate (\ref{2.2}) can be written as
\[ \|u\|_{Y_{(t, t+T)}}\leq C_T \|u(\cdot, t)\|  +C_T
\|\vec{h}\|_{B_{(t,t+T)}} \ for \ any \ t\geq 0\] because of the
semigroup property of the IBVP (\ref{HP}).

\medskip
Attention now is turned to the IVP of a linearized KdV--equation
with a variable coefficient $a=a(x,t)$, namely,

\begin{equation}\label{SP}
    \begin{cases}
    u_t +(au)_x+u_x +u_{xxx}=0, &  x\in(0,L),\\
    u(x,0)=\phi(x),\\
    u(0,t)=h_1 (t), \ u_x(L,t)=h_2(t), \ u_{xx}(L,t)=h_3 (t).
    \end{cases}
\end{equation}
The following result is known \cite{kz}.
\begin{prop}\label{pro2.7} Let $T>0$ be given. Assume that $a\in Y_{(0,T)}$.
Then for any $\phi \in L^2 (0,L), \vec{h} \in B_{(0,T)}$, the IBVP
(\ref{SP}) admits a unique solution $u\in Y_{(0,T)}$ satisfying
\[ \| u\|_{Y_{(0,T)}} \leq \mu (\| a\|_{Y_{(0,T)}})\left ( \| \phi
\| +\|\vec{h}\|_{B_{(0,T)}}\right ) ,
 \] where $\mu : \mathbb{R}^+ \to \mathbb{R}^+$ is a $T-$dependent
continuous nondecreasing function independent of $\phi $ and
$\vec{h}$.
\end{prop}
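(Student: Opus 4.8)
The plan is to treat the variable-coefficient term $(au)_x$ as a forcing term and solve \eqref{SP} by a fixed-point argument built on the linear theory already in place. First I would recast the problem in Duhamel form: a function $u\in Y_{(0,T)}$ solves \eqref{SP} if and only if
\[
u(t) = W(t)\phi + W_b(t)\vec{h} - \int_0^t W(t-\tau)\bigl(a(\tau)u(\tau)\bigr)_x\,d\tau =: \Gamma(u)(t).
\]
To make sense of the integral term I would invoke the smoothing/gain-of-derivative estimate for $W(t)$ applied to a forcing in $L^1(0,T;L^2(0,L))$ or, more naturally here, the standard inhomogeneous estimate for the Cauchy problem for the linear KdV group on $(0,L)$ with zero boundary data (which is implicit in Propositions~\ref{pro2.1} and \ref{pro2.9} and the construction in \cite{kz,rkz}): namely that $v(t)=\int_0^t W(t-\tau)F(\tau)\,d\tau$ satisfies $\|v\|_{Y_{(0,T)}}\le C_T\|F\|_{L^1(0,T;L^2(0,L))}$, or with a suitable dual-space norm on $F$. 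The point is that $F=(au)_x$ has one spatial derivative, but $u\in Y_{(0,T)}\subset L^2(0,T;H^1(0,L))$ supplies exactly one derivative, so $(au)_x\in L^2(0,T;H^{-1}(0,L))$ with norm controlled by $\|a\|_{Y_{(0,T)}}\|u\|_{Y_{(0,T)}}$ (using the algebra-type estimate $\|ab\|_{H^{-1}}\lesssim \|a\|_{H^1}\|b\|_{H^1}$ on the bounded interval, or more simply $\|au\|_{L^2}\lesssim\|a\|_{L^\infty_x}\|u\|_{L^2_x}$ integrated in $t$).

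Next I would estimate $\Gamma$. By Proposition~\ref{pro2.9}, $\|W(t)\phi + W_b(t)\vec h\|_{Y_{(0,T)}}\le C_T(\|\phi\|+\|\vec h\|_{B_{(0,T)}})$; by the inhomogeneous estimate, $\|\Gamma(u)-W(t)\phi-W_b(t)\vec h\|_{Y_{(0,T)}}\le C_T\,\theta(T)\,\|a\|_{Y_{(0,T)}}\|u\|_{Y_{(0,T)}}$, where $\theta(T)\to0$ as $T\to0$ (this smallness comes either from shrinking the time interval or from a factor like $T^{1/2}$ produced by Hölder in $t$). Similarly, for the difference of two iterates, $\|\Gamma(u_1)-\Gamma(u_2)\|_{Y_{(0,T)}}\le C_T\,\theta(T)\,\|a\|_{Y_{(0,T)}}\|u_1-u_2\|_{Y_{(0,T)}}$, since the map $u\mapsto(au)_x$ is linear in $u$. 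For $T$ small enough that $C_T\theta(T)\|a\|_{Y_{(0,T)}}<\tfrac12$, $\Gamma$ is a contraction on a ball of $Y_{(0,T)}$, yielding a unique fixed point; uniqueness in all of $Y_{(0,T)}$ follows from the same contraction estimate applied to any two solutions. One then bootstraps from the small-time interval to the full interval $[0,T]$ by iterating on consecutive subintervals of length $\sim(C\|a\|_{Y_{(0,T)}})^{-2}$ (finitely many, since $\|a\|_{Y_{(0,T)}}$ is finite), at each step taking the terminal value as the new initial datum and estimating its $L^2$-norm by the $Y$-norm on the previous subinterval; accumulating the constants produces the nondecreasing function $\mu(\|a\|_{Y_{(0,T)}})$, which is continuous because it is built from finitely many compositions of the continuous constants $C_T$.

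The main obstacle is not the contraction mechanism — that is routine once the right linear estimates are cited — but rather making the inhomogeneous (Duhamel) estimate for the finite-interval linear KdV semigroup precise at the regularity where the forcing has a spatial derivative. On the whole line one uses Bourgain-space or Kato-smoothing machinery; here one must rely on the boundary-integral-operator construction of \cite{rkz} and the sharp $Y_{(0,T)}$-estimates of \cite{kz} to absorb the derivative in $(au)_x$ without losing regularity, and to verify that the resulting operator norm carries a favorable power of $T$. Once that estimate is in hand (it is, in effect, the $s=0$ case of the linear theory that underlies Theorem~C), the rest of the argument is the standard small-interval-then-iterate scheme, and the dependence of $\mu$ on $\|a\|_{Y_{(0,T)}}$ is simply an accounting of how many subintervals are needed.
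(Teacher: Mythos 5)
The paper gives no proof of Proposition \ref{pro2.7} at all, citing it as known from \cite{kz}; your Duhamel-plus-contraction argument, resting on the bilinear estimate and the inhomogeneous estimate recorded as Lemma \ref{lem2.1} together with Propositions \ref{pro2.1} and \ref{pro2.9}, is exactly the standard proof used there and mirrors the paper's own proof of the nonlinear analogue, Proposition \ref{reg}. Your two key observations are the right ones --- that the Lipschitz constant of $\Gamma$ is $C_T\theta(T)\|a\|_{Y_{(0,T)}}$ with $\theta(T)\to 0$ as $T\to 0$, so smallness must come from shrinking the time step rather than from the radius of the ball (the perturbation being linear in $u$, not quadratic), and that iterating over finitely many subintervals whose length depends only on $\|a\|_{Y_{(0,T)}}$ is what produces the nondecreasing function $\mu$ --- so the proposal is correct and takes essentially the same route the cited source does.
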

The next theorem presents an asymptotic estimate for solutions of
the IBVP (\ref{SP}), which will play  an important role in studying
asymptotic behavior of the nonlinear IBVP in the next section.
\begin{thm}\label{th2.8} There exists a $T>0$, $r>0$ and $\delta >0$ such
that if $a\in Y_T$ with
\[ \| a\|_{Y_T} \leq \delta ,\]
then for any $\phi \in L^2 (0,L), \vec{h} \in B_{T}$, the
corresponding solution $u$ of (\ref{SP}) satisfies
\[ \| u
\|_{Y_{(t, t+T)}}\leq C_1 e^{-rt}\|\phi \|   + C_2  \|
\vec{h}\|_{B_T}\] for any $t\geq 0$ where $C_1 $ and $C_2$ are
constants independent of $\phi $ and $\vec{h}$. Furthermore, if
\begin{equation}\label{ccc} \|\vec{h}\|_{B_{(t, t+T)}}\leq g(t) e^{-\nu t} \ for \ all \
t\geq 0 \end{equation} with $\nu >0$,  $g\in B_T$ and $\| g\|_{B_T}
\leq \delta _2$, then there exist  $0< \gamma \leq \max \{ r, \nu \}
$ and $C>0$ such that
\[  \|u\|_{Y_{(t, t+T)}} \leq C\|(\phi , \vec{h})\|_{X_T} e^{-\gamma
t}\] for any $t\geq 0$.
\end{thm}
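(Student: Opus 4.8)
The plan is to combine the exponential decay of the homogeneous linear semigroup (Theorem \ref{decay-2}) with a bootstrapping/summing argument over consecutive time intervals of length $T$, treating the variable-coefficient term $(au)_x$ and the boundary forcing $\vec h$ as perturbations. First I would fix $T>0$ large enough that the decay constant in Theorem \ref{decay-2} gives $C_T e^{-\gamma T} \le \tfrac12$ (say), so that the homogeneous flow is a strict contraction over one period. Writing the solution of (\ref{SP}) on $[t,t+T]$ via Duhamel as $u = W(t)\phi + W_b(t)\vec h - \int_0^t W(t-s)(a u)_x\, ds$ and invoking Proposition \ref{pro2.7} locally, one gets $\|u\|_{Y_{(t,t+T)}} \le \mu(\|a\|_{Y_T})\big(\|u(\cdot,t)\| + \|\vec h\|_{B_{(t,t+T)}}\big)$; the point is that for $\|a\|_{Y_T}\le\delta$ small, $\mu(\delta)$ is close to $C_T$ and the term $\|u(\cdot,t)\|$ decays like the homogeneous flow up to the forcing. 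The cleaner route is an \emph{a priori} recursion for $a_n := \|u(\cdot, nT)\|$: from the semigroup property and Theorem \ref{decay-2} applied to the $W(t)\phi$ part, plus a small-$\delta$ absorption of the $(au)_x$ contribution, derive
\begin{equation}\label{recursion}
a_{n+1} \le \theta\, a_n + C \sup_{t\ge nT}\|\vec h\|_{B_{(t,t+T)}}, \qquad \theta := C_T e^{-\gamma T} + C\delta < 1.
\end{equation}

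Next I would solve the recursion \eqref{recursion}. Iterating gives $a_n \le \theta^n a_0 + C\sum_{k=0}^{n-1}\theta^{n-1-k} H_k$ where $H_k := \sup_{t\ge kT}\|\vec h\|_{B_{(t,t+T)}}$. For the first assertion, since $H_k \le \|\vec h\|_{B_T}$ uniformly, the geometric sum is bounded by $C(1-\theta)^{-1}\|\vec h\|_{B_T}$, and $\theta^n a_0 \le e^{-rt}\|\phi\|$ for $r := -T^{-1}\log\theta > 0$ and $t = nT$; feeding $a_n$ back into the local estimate Proposition \ref{pro2.7} on $[nT,(n+1)T]$ (and interpolating for $t$ not a multiple of $T$, using monotonicity of the sup-norms) yields $\|u\|_{Y_{(t,t+T)}} \le C_1 e^{-rt}\|\phi\| + C_2\|\vec h\|_{B_T}$. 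For the second assertion, the hypothesis (\ref{ccc}) gives $H_k \le \|g\|_{B_T} e^{-\nu kT} \le \delta_2 e^{-\nu kT}$, so the convolution sum $\sum_{k}\theta^{n-1-k} e^{-\nu kT}$ is, by the standard estimate for a geometric convolution, bounded by $C e^{-\gamma nT}$ with any $\gamma < \min\{r,\nu\}$ (and $=$ in the borderline case handled by an extra polynomial factor absorbed into a slightly smaller $\gamma$). Hence $a_n \le C(\|\phi\| + \|\vec h\|_{B_T}) e^{-\gamma nT} = C\|(\phi,\vec h)\|_{X_T} e^{-\gamma nT}$, and transferring to $Y_{(t,t+T)}$ as before completes the proof.

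The main obstacle I anticipate is making the absorption constant $C\delta$ in \eqref{recursion} honest — i.e. showing that the variable-coefficient contribution over one period $T$ is genuinely controlled by $\delta$ times the incoming data norm $a_n$, \emph{uniformly in $n$}, without the constant secretly depending on $T$ in a way that spoils the choice $C_T e^{-\gamma T}\le\tfrac12$. This requires care because $C_T$ in Proposition \ref{pro2.7} grows with $T$, so one cannot simultaneously take $T$ large (to shrink $C_T e^{-\gamma T}$) and expect $\mu(\delta)$ to stay near $1$. The resolution is to first fix $T$ (hence $C_T$) to get $C_T e^{-\gamma T} \le \tfrac12$, and only \emph{then} choose $\delta$ small depending on that fixed $T$ so that $C\delta \le \tfrac14$, giving $\theta \le \tfrac34$; the local-in-time estimate on each $[nT,(n+1)T]$ then uses this fixed $T$ throughout. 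A secondary technical point is the non-integer $t$ transfer and the borderline case $\nu = r$, both of which are routine once the recursion is in hand.
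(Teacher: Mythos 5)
Your proposal follows essentially the same route as the paper: Duhamel's formula, a one-period contraction estimate for $y_n=u(\cdot,nT)$ obtained by combining the exponential decay of $W(t)$ (Proposition \ref{decay-1}) with a small-$\delta$ absorption of the $(au)_x$ term via Proposition \ref{pro2.7}, followed by an iteration lemma (the paper's Lemma \ref{lem2.2}, which is exactly your geometric/convolution sum, including the borderline-rate case). The one clarification worth making is that the obstacle you flag is resolved not by hoping $C_Te^{-\gamma T}$ can be made small despite $C_T$ growing, but by observing --- as the paper does explicitly --- that the constant multiplying $e^{-\beta T}\|\phi\|$ comes from the $T$-independent pointwise decay estimate of Proposition \ref{decay-1}, so one may take $T$ large first to get the contraction and only then shrink $\delta$ against the now-fixed $C_T$.
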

The following two technical lemmas are needed for the proof of
Theorem \ref{th2.8}.
\begin{lem}\label{lem2.1}  Let $T>0$ be given. There exists a constant $C=C_T >0$
such that
\begin{itemize}
\item[(i)] for any $u, \ v\in Y_{(0,T)}$,
\[\| (uv)_x\|_{L^1(0,T;L^2(0,L))}\leq C_T \| u\|_{Y_{0,T}}\|v\|_{Y_{(0,T)}}; \]
\item[(ii)] for $f\in L^1 (0,T; L^2 (0,L))$, let
\[ u=\int ^t_0 W(t-\tau) f(\tau ) d\tau ,\]
then
\[ \|u\|_{Y_{(0,T)}} \leq C_T\| f\|_{L^1 (0,T; L^2 (0,L))} .\]
\end{itemize}

\end{lem}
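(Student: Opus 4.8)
The plan is to handle the two parts separately, using the Sobolev embedding $H^1(0,L)\hookrightarrow L^\infty(0,L)$ for part (i), and the smoothing estimate of Proposition \ref{pro2.1} together with Minkowski's integral inequality for part (ii).

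For part (i), I would write $(uv)_x = u_x v + u v_x$ and estimate each term pointwise in $t$. Since $H^1(0,L)\hookrightarrow L^\infty(0,L)$, for a.e.\ $t$ one has
\[ \|u_x(\cdot,t)\,v(\cdot,t)\|_{L^2(0,L)} \le \|u_x(\cdot,t)\|_{L^2(0,L)}\,\|v(\cdot,t)\|_{L^\infty(0,L)} \le C\,\|u(\cdot,t)\|_{H^1(0,L)}\,\|v(\cdot,t)\|_{H^1(0,L)}, \]
and symmetrically for $u\,v_x$. Integrating over $(0,T)$ and applying the Cauchy--Schwarz inequality in the time variable gives
\[ \|(uv)_x\|_{L^1(0,T;L^2(0,L))} \le C\,\|u\|_{L^2(0,T;H^1(0,L))}\,\|v\|_{L^2(0,T;H^1(0,L))} \le C_T\,\|u\|_{Y_{(0,T)}}\,\|v\|_{Y_{(0,T)}}. \]
This part is elementary: each product has one factor controlled in $L^\infty_x$ (by $H^1_x$) and one ``derivative'' factor in $L^2_x$, and the two $L^2_x$ factors are paired by Cauchy--Schwarz in $t$, which is exactly what the $L^2(0,T;H^1)$ component of the $Y$-norm supplies.

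For part (ii), I would split the $Y_{(0,T)}$-norm into its $C([0,T];L^2)$ and $L^2(0,T;H^1)$ components. For the first, since $A$ is dissipative, $W(t)$ is a contraction on $L^2(0,L)$ (equivalently $\tfrac{d}{dt}\|W(t)\phi\|^2\le 0$), hence
\[ \|u(\cdot,t)\|_{L^2(0,L)} \le \int_0^t \|W(t-\tau)f(\tau)\|_{L^2(0,L)}\,d\tau \le \int_0^t \|f(\tau)\|_{L^2(0,L)}\,d\tau \le \|f\|_{L^1(0,T;L^2(0,L))}, \]
and continuity in $t$ follows from the strong continuity of $W(\cdot)$ and dominated convergence. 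For the $L^2(0,T;H^1)$ component I would use Minkowski's integral inequality to push the norm inside the $\tau$-integral:
\[ \Big\|\int_0^t W(t-\tau)f(\tau)\,d\tau\Big\|_{L^2(0,T;H^1(0,L))} \le \int_0^T \big\| \,t\mapsto \mathbf{1}_{\{t>\tau\}}W(t-\tau)f(\tau)\big\|_{L^2(0,T;H^1(0,L))}\,d\tau. \]
For fixed $\tau$, a change of variables $t\mapsto t-\tau$ reduces the inner norm to $\|W(\cdot)f(\tau)\|_{L^2(0,T-\tau;H^1(0,L))}\le \|W(\cdot)f(\tau)\|_{Y_{(0,T)}}\le C_T\|f(\tau)\|_{L^2(0,L)}$ by Proposition \ref{pro2.1} applied with initial datum $f(\tau)$. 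Integrating in $\tau$ gives $\|u\|_{L^2(0,T;H^1(0,L))}\le C_T\|f\|_{L^1(0,T;L^2(0,L))}$, and combining the two components yields the claim.

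The computations are all routine; the work is really done by Proposition \ref{pro2.1}, i.e.\ the Kato-type smoothing of the homogeneous linear problem, so the only ``obstacle'' is packaging that estimate correctly through the Duhamel convolution structure, which Minkowski's inequality handles cleanly. No genuinely new difficulty arises.
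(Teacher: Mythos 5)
Your argument is correct and complete: part (i) is the standard bilinear estimate via the one-dimensional embedding $H^1(0,L)\hookrightarrow L^\infty(0,L)$ followed by Cauchy--Schwarz in $t$ against the $L^2(0,T;H^1)$ component of the $Y$-norm, and part (ii) is the standard Duhamel bound obtained from the contraction property of $W(t)$ together with Minkowski's integral inequality and Proposition \ref{pro2.1}. The paper itself gives no proof of Lemma \ref{lem2.1} but refers to \cite{kz}, where essentially this same argument appears, so there is nothing to flag.
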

\begin{lem}\label{lem2.2}
Consider a sequence $\{y_n\} ^{\infty}_0$ in a Banach space $X$
generated by iteration as follows:
\begin{equation}
y_{n+1}= Ay_n + F(y_n )\, , \quad n=0,1,2, \cdots . \label{3.1-1}
\end{equation}
Here, the linear operator  $A$ is bounded  from $X$ to $X$ with
\begin{equation} \|A y_n \|_{X} \leq \gamma\|y_n\|_{X} \label{3.2-1}
\end{equation}
for some finite  value $\gamma$ and all $n\geq 0$.  The nonlinear
function $F$ mapping $X$ to $X$ is such that there is constant
$\beta $  and a sequence $\{b_n\}_{n \geq 0}$ for which
\begin{equation}
 \|F(y_n)\|_X \leq \beta  \|y_n\|_X^2 + b_n \label{3.3.1}
\end{equation}
for all $n\geq 0 .$
\begin{itemize} \item[(i)] If   $0<\gamma <1$,
then
 the sequence $\{ y_n\}_{n=0}^\infty $ defined
by (\ref{3.1-1}) satisfies
\begin{equation}
\|y_{n+1} \|_{X} \leq \gamma ^{n+1} \|y_0\|_X +\frac{b^*}{1-\gamma }
\label{3.4-2}
\end{equation}
for any $n \geq 1$, where $b^* = \sup _{n\geq 0} b_n$.
\item[(ii)] If, in addition,
\[ b_{n+1} \leq \delta  ^n c_n  \]
with some finite value of $\delta $ , then
\begin{equation}
\|y_{n+1} \|_{X} \leq \gamma ^{n+1} \|y_0\|_X + nr ^n c^*
\label{3.4-3}
\end{equation}
for any $n \geq 1$, where $r=\max \{ \gamma, \ \delta \}$ and $c^* =
\sup _{n\geq 0} c_n$.
\end{itemize}

\end{lem}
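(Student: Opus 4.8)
The statement concerns only the scalar recursion obtained by taking $X$-norms in (\ref{3.1-1}): writing $a_n:=\|y_n\|_X$, the hypotheses (\ref{3.2-1})--(\ref{3.3.1}) give $a_{n+1}\le \gamma a_n+\beta a_n^2+b_n$, and the whole plan is to analyse this one-dimensional inequality. The only genuinely nonlinear point --- and the main obstacle --- is to stop the quadratic term from driving the iteration off to infinity; this is where the smallness that is always present in the applications must be used. Concretely, I would first record a uniform a priori bound: if $a_0$ and $b^{*}=\sup_{n}b_n$ are small enough that the quadratic $\beta M^{2}-(1-\gamma)M+b^{*}$ has a root $M\ge a_0$ with $\gamma+\beta M<1$ --- for instance $4\beta b^{*}<(1-\gamma)^{2}$ together with $a_0\le M:=\bigl((1-\gamma)-\sqrt{(1-\gamma)^{2}-4\beta b^{*}}\,\bigr)/(2\beta)$ suffices --- then an immediate induction yields $a_n\le M$ for all $n$, since $a_n\le M$ forces $a_{n+1}\le \gamma M+\beta M^{2}+b^{*}\le M$ by the choice of $M$.

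Granting this bound, part (i) is a geometric-series computation. Because $\beta a_n^{2}\le \beta M\,a_n$, the recursion collapses to the linear inequality $a_{n+1}\le \gamma' a_n+b_n$ with $\gamma':=\gamma+\beta M<1$, and iterating gives
\[
a_{n+1}\le (\gamma')^{\,n+1}a_0+\sum_{j=0}^{n}(\gamma')^{\,n-j}b_j\le (\gamma')^{\,n+1}a_0+\frac{b^{*}}{1-\gamma'},
\]
which is (\ref{3.4-2}). In the regime of the applications $\beta M$, hence $\gamma'-\gamma$, can be made as small as one likes, so the rate and the constant are as stated; alternatively one may retain $\beta a_n^{2}\le \beta M^{2}$ as a constant forcing term and bootstrap, using that $\beta M^{2}=O(b^{*})$ for $b^{*}$ small to recover the displayed bound with the original $\gamma$.

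For part (ii) I would feed the extra hypothesis $b_{n+1}\le \delta^{n}c_n$ into the convolution sum from Step (i). Put $r:=\max\{\gamma',\delta\}$, $c^{*}:=\sup_{n}c_n$, and bound $b_0\le c^{*}$ (absorbing it if necessary); for $j\ge1$ one has $b_j\le \delta^{\,j-1}c_{j-1}\le \delta^{\,j-1}c^{*}$, so every term satisfies $(\gamma')^{\,n-j}b_j\le r^{\,n-j}r^{\,j-1}c^{*}=r^{\,n-1}c^{*}$. Summing the $n+1$ terms gives $\sum_{j=0}^{n}(\gamma')^{\,n-j}b_j\le (n+1)r^{\,n-1}c^{*}$, while $(\gamma')^{\,n+1}a_0\le r^{\,n+1}a_0\le r^{\,n-1}a_0$, so altogether $a_{n+1}\le C\,n\,r^{\,n}\bigl(a_0+c^{*}\bigr)$ for a suitable constant, i.e.\ (\ref{3.4-3}). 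The only things needing care here are the elementary estimate $(\gamma')^{\,n-j}\delta^{\,j-1}\le r^{\,n-1}$ and counting the terms; there is no analytic difficulty, the substantive content having been supplied by the linear decay estimates of Section 2 that make the hypothesis $\gamma<1$ available when the lemma is invoked.
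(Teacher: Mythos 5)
The paper contains no proof of this lemma to compare against: it only remarks that the argument is ``similar to that of Lemma 3.2 in \cite{bsz-forcing} with just a minor modification.'' Your argument is correct and is essentially the intended one: pass to the scalar recursion $a_{n+1}\le \gamma a_n+\beta a_n^2+b_n$ for $a_n=\|y_n\|_X$, secure a uniform bound $a_n\le M$ by induction using the smaller root of $\beta M^2-(1-\gamma)M+b^*=0$, then absorb the quadratic term into the linear one and sum a geometric series (and, for (ii), the convolution estimate $(\gamma')^{\,n-j}\delta^{\,j-1}\le r^{\,n-1}$). You are also right to insist on the smallness step: as literally stated the lemma is false, since with $A=0$, $b_n=0$ and $\beta\|y_0\|_X>1$ the iteration blows up doubly exponentially, so some hypothesis like $4\beta b^*<(1-\gamma)^2$ together with $\|y_0\|_X\le M$ is indispensable; these conditions are exactly what is available at the two points where the lemma is invoked (the proofs of Theorem \ref{th2.8} and Proposition \ref{3-1}), where $\|y_0\|$ and $\sup_n b_n$ are taken small at the outset. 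Three minor discrepancies between what you prove and what is claimed are worth recording: your decay rate is $\gamma'=\gamma+\beta M$ rather than $\gamma$ (and your alternative with rate $\gamma$ carries the extra constant $\beta M^2/(1-\gamma)$, which is $O((b^*)^2)$ but not zero); your bound in (ii) is $C\,n\,r^n(a_0+c^*)$ rather than $n r^n c^*$; and the hypothesis $b_{n+1}\le \delta^n c_n$ places no constraint on $b_0$, so the step ``$b_0\le c^*$'' requires the harmless convention that $c^*$ also dominates $b_0$. None of these affects the way the lemma is used, since it is only ever applied up to unspecified constants and rates.
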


\bigskip
The proof of this Lemma  \ref{lem2.1} can be found in \cite{kz}. As
for Lemma \ref{lem2.2}, its proof is similar to that of Lemma 3.2 in
\cite{bsz-forcing} with just a minor modification.

\bigskip

 {\bf Proof of Theorem \ref{th2.8}:}    Rewrite (\ref{SP}) in its integral form
\[ u(t)=W(t)\phi +W_b (t)\vec{h} -\int ^t_0 W(t-\tau )(au)_x (\tau ) d\tau
.\] Thus, for any $T>0$, using Proposition \ref{decay-1} and
Proposition \ref{pro2.9},
\begin{eqnarray*}
\| u(\cdot, T)\|  &\leq & Ce^{-\beta T}\| \phi \|
+C_T\| \vec{h}\|_{B_{(0,T)}} +\int ^T_0 \|(au)_x \|  (\tau) d\tau \\ \\
&\leq & Ce^{-\beta T}\| \phi \|  +C_T\| \vec{h}\|_{B_{(0,T)}}
+C_T\|a\|_{Y_{(0,T)}}\|u\|_{Y_{(0,T)}} \\ \\ &\leq & Ce^{-\beta T}\|
\phi \|  +C_T\| \vec{h}\|_{B_{(0,T)}} +C_T\|a\|_{Y_{(0,T)}}\mu
(\|a\|_{Y_{(0,T)}})\left (\| \phi \|
+C_T\| \vec{h}\|_{B_{(0,T)}} \right )\\ \\
&\leq & Ce^{-\beta T}\|\phi \|  +C_T\|a\|_{Y_{(0,T)}}\mu
(\|a\|_{Y_{(0,T)}})\|\phi \|  + C_T (1+\|a\|_{Y_{(0,T)}}\mu
(\|a\|_{Y_{(0,T)}})) \| \vec{h}\|_{B_{(0,T)}}.
\end{eqnarray*}
 Note that in the above
estimate, the constant $C$ is independent of $T$. Let $$y_n =
u(\cdot, nT) \ \mbox{ for $n=0, 1, 2, \cdots $}$$ and let $v$ be the
solution of the IBVP

\begin{equation}
\left \{ \begin{array}{l} v_t + v_x +(av)_x +v_{xxx}=0, \qquad
v(x,0) = y_n (x),
\\ \\ v(0,t) = h_1(t+nT), \qquad v_x(L,t)=h_2(t+nT), \qquad v_{xx} (L,t)=h_3(t+nT)
\end{array} \right. \label{2.5-3}
\end{equation}
Thus $y_{n+1} (x)= v(x,T)$ by the semigroup property of the system
(\ref{SP}). Consequently, we have the following estimate for
$y_{n+1}:$ \begin{eqnarray*}
 \| y_{n+1}\| &\leq  & Ce^{-\beta T}\|y_n
\|  +C_T\|a\|_{Y_{(nT,(n+1)T)}}\mu (\|a\|_{Y_{(nT,(n+1)T)}})\|y_n \|  \\
\\ &+ & C_T (1+\|a\|_{Y_{(nT,(n+1)T)}}\mu (\|a\|_{Y_{(nT,(n+1)T)}}))
\|\vec{h} |_{B_{(nT,(n+1)T)}}\end{eqnarray*}
 for $n=0,1,2, \cdots $.
Choose $T$ and $\delta $ such that
\[ Ce^{-\beta T}=\gamma <1,\quad  \gamma + C_T \delta \mu (\delta
):= r < 1.\] Then,
\[ \|y_{n+1}\|  \leq r \| y_n \|   + b_n \]
for all $n\geq 0$ if $\| a\|_{Y_T} \leq \delta $ where
\[ b_n= C_T (1+\|a\|_{Y_{(nT,(n+1)T)}}\mu
(\|a\|_{Y_{(nT,(n+1)T)}})) \|\vec{h}\|_{B_{(nT,(n+1)T)}} .\] It
follows from Lemma \ref{lem2.2} that
\[ \|y_{n+1}\|  \leq r\|y_n\|  +\frac{b^*}{1-r}\]
or
\[ \|y_{n+1}\|  \leq r\|y_n\|  + n\delta ^ nc^* \]
 with $\delta = e^{-\eta T}$ for any $n\geq 0$  depending if (\ref{ccc}) holds
 where $b^*= \sup_{n\geq 0} b_n $ and $c^*= \sup _{n\geq 0} c_n$.
 These
inequalities imply the conclusion of Theorem \ref{th2.8}. $\Box$

\section{Nonlinear problems}
\setcounter{equation}{0}
 In this section we consider the IBVP of the
nonlinear KdV equation posed on the finite domain $(0,L)$:

\begin{equation}\label{3.1}
    \begin{cases}
        u_t+ uu_x + u_x + u_{xxx}=0, \quad  x\in(0,L),\\
        u(x,0)=\phi(x),\\
        u(0,t)=h_1(t), \quad u_x(L,t)=h_2(t), \quad
        u_{xx}(L,t)=h_3(t).
    \end{cases}
\end{equation}
According to Theorem B,\emph{ for given $(\phi, \vec{h})\in X_{(0,
T)}$, there exists a $T^*\in (0,T]$   such that the IBVP (\ref{3.1})
admits a unique solution $u\in Y_{(0,T^*)}$.} This well-posedness
result is temporally local in the sense that  the  solution $u$ is
only guaranteed to exist on the time interval $(0, T^*)$, where
$T^*$ depends on the norm of $(\phi ,\vec{h})$ in the space
$X_{(0,T)}$. The next proposition presents an alternative view of
local well-posedness for the IBVP (\ref{3.1}). If the norm of
$(\phi, \vec{h})$ in $X_{(0,T)}$ is not too large, then the
corresponding solution is guaranteed to exist over the entire time
interval $(0, T)$.
\begin{prop}\label{reg}
Let $T>0$ be given.  There exists $\delta >0$ such that if $\|(\phi,
\vec{h})\|_{X_{(0,T)}} \le \delta $, then the solution $u$  of the
IBVP (\ref{3.1}) belongs to the space $Y_{(0,T)}$ and, moveover,
there exists a constat $C>0$ depending only on $T$  and $\delta $
such that
\[ \| u\|_{Y_{((0,T))}} \leq C\| (\phi , \vec{h})\|_{X_{(0,T)}} .\]
\end{prop}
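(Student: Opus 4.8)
The plan is to recast the IBVP (\ref{3.1}) as a fixed-point problem on the space $Y_{(0,T)}$ and run a contraction-mapping argument whose smallness requirement is met precisely when $\|(\phi,\vec h)\|_{X_{(0,T)}}$ is small. Writing the solution in Duhamel form,
\[
u(t) = W(t)\phi + W_b(t)\vec h - \frac12\int_0^t W(t-\tau)\,(u^2)_x(\tau)\,d\tau =: \Gamma(u),
\]
I would show that $\Gamma$ maps a closed ball $\{\,u\in Y_{(0,T)} : \|u\|_{Y_{(0,T)}}\le R\,\}$ into itself and is a contraction there, for a suitable $R$ depending on $T$ and a suitably small $\delta$. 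By Proposition \ref{pro2.9}, the linear part obeys $\|W(t)\phi + W_b(t)\vec h\|_{Y_{(0,T)}} \le C_T(\|\phi\| + \|\vec h\|_{B_{(0,T)}}) = C_T\|(\phi,\vec h)\|_{X_{(0,T)}}$. For the nonlinear term, Lemma \ref{lem2.1}(i) gives $\|(u^2)_x\|_{L^1(0,T;L^2(0,L))} \le C_T\|u\|_{Y_{(0,T)}}^2$, and then Lemma \ref{lem2.1}(ii) gives $\|\int_0^t W(t-\tau)(u^2)_x\,d\tau\|_{Y_{(0,T)}} \le C_T\|u\|_{Y_{(0,T)}}^2 \le C_T R^2$.

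Thus for $u$ in the ball of radius $R$ we get $\|\Gamma(u)\|_{Y_{(0,T)}} \le C_T\delta + C_T R^2$, and this is $\le R$ provided we first fix $R$ small enough that $C_T R \le \tfrac12$ and then require $C_T\delta \le \tfrac{R}{2}$, i.e. $\delta \le \tfrac{R}{2C_T}$. For the contraction estimate, I would use the bilinearity $u^2 - v^2 = (u+v)(u-v)$ together with Lemma \ref{lem2.1}(i),(ii) to obtain
\[
\|\Gamma(u) - \Gamma(v)\|_{Y_{(0,T)}} \le C_T\big(\|u\|_{Y_{(0,T)}} + \|v\|_{Y_{(0,T)}}\big)\|u-v\|_{Y_{(0,T)}} \le 2C_T R\,\|u-v\|_{Y_{(0,T)}} \le \tfrac12\|u-v\|_{Y_{(0,T)}},
\]
with the same choice $C_T R \le \tfrac12$. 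Hence $\Gamma$ has a unique fixed point $u$ in the ball, which is the desired solution in $Y_{(0,T)}$; uniqueness in all of $Y_{(0,T)}$ follows by absorbing the difference term as above (any two solutions with sufficiently small norm must coincide, and by the a priori bound both lie in the ball). Finally, from $\|u\|_{Y_{(0,T)}} \le C_T\delta + C_T\|u\|_{Y_{(0,T)}}^2$ and $C_T\|u\|_{Y_{(0,T)}} \le C_T R \le \tfrac12$ one gets $\tfrac12\|u\|_{Y_{(0,T)}} \le C_T\|(\phi,\vec h)\|_{X_{(0,T)}}$, giving the asserted bound $\|u\|_{Y_{(0,T)}} \le C\|(\phi,\vec h)\|_{X_{(0,T)}}$ with $C = 2C_T$.

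The only genuinely delicate point is a bookkeeping one: the constant $C_T$ in Proposition \ref{pro2.9} and in Lemma \ref{lem2.1} grows with $T$, so the radius $R \sim 1/C_T$ and the threshold $\delta \sim R/C_T$ shrink as $T$ increases; one must choose $R$ and then $\delta$ \emph{after} $T$ is fixed, which is exactly what the statement allows ($\delta$ depends on $T$). There is no genuine obstacle beyond this — the s-compatibility hypothesis is only needed so that Proposition \ref{pro2.9} applies and the Duhamel representation is legitimate, and everything else is the standard Picard iteration packaged by the two lemmas. (Alternatively, one could phrase the same argument through Lemma \ref{lem2.2} with $b_n \equiv 0$ over a single step, but the direct contraction is cleaner here.)
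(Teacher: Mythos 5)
Your proposal is correct and follows essentially the same route as the paper: both recast the IBVP in Duhamel form, apply Proposition \ref{pro2.9} to the linear part and Lemma \ref{lem2.1} to the nonlinear term, and run a contraction-mapping argument on a small ball in $Y_{(0,T)}$ whose radius and the threshold $\delta$ are chosen after $T$ is fixed. The only differences are cosmetic (your absorption step at the end versus the paper's direct bound $\|u\|_{Y_{(0,T)}}\leq \tfrac32 C_1\delta$ from the self-mapping estimate), so nothing further is needed.
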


\begin{proof}  For $(\phi ,
\vec{h})\in X_{(0,T)}$, define a map $\Gamma : Y_{(0,T)} \to
Y_{(0,T)}$ by
\[ \Gamma (v)= W(t) \phi(x)+ W_b (t) \vec{h} -\int_0^t W(t-\tau) \left (vv_x
\right) (\tau)d\tau.\] By Lemma \ref{lem2.1} and Proposition
\ref{pro2.9}, for any $v, \, v_1, \, v_2 \in Y_{(0,T)}$,
\[ \| \Gamma (v)\|_{Y_{(0,T)}}\leq C_1 \| (\phi , \vec{h})\|_{X_{(0,T)}}
+C_2\| v\|^2 _{Y_{(0,T)}} \] and
\[ \| \Gamma (v_1)-\Gamma (v_2)\|_{Y_{(0,T)}}\leq C_2
\|(v_1+v_2\|_{Y_{(0,T)}}\|v_1-v_2\|_{Y_{(0,T)}} \] where $C_1 $ and
$C_2$ are constants depending only on $T$. Choose
\[ \delta = \frac{1}{8C_1C_2}, \qquad M=2C_1 \delta .\]
Let
\[ S_M = \{ v\in Y_{(0,T)}, \ \|v\|_{Y_{(0,T)}} \leq M \} .\]
Assume that $\|(\phi , \vec{h})\|_{X_{(0,T)}}\leq \delta$. Then, for
any $v, \, v_1, \, v_2 \in M$,
\[  \| \Gamma (v)\|_{Y_{(0,T)}}\leq C_1 \| (\phi , \vec{v}\|_{X_{(0,T)}}
+C_2\| v\|^2 _{Y_{(0,T)}} \leq C_1 \delta +4C_2 C_1^2 \delta ^2\leq
\frac32C_1 \delta \leq M \] and
\[  \| \Gamma (v_1)-\Gamma (v_2)\|_{Y_{(0,T)}}\leq 2C_2 M
 \|v_1-v_2\|_{Y_{(0,T)}}\leq \frac12 \|v_1-v_2\|_{Y_{(0,T)}}. \]
 Thus $\Gamma $ is a contraction in the ball $S_M$ and its fixed
 point $u\in Y_{(0,T)}$ is the desired solution of the the IBVP
 (\ref{3.1}) which, moreover, satisfies
 \[ \|u\|_{Y_{(0,T)}}\leq \frac34 C_1 \|(\phi , \vec{h})\|_{X_{(0,T)}}
 .\]
 \end{proof}

Next we show that if the initial value $\phi$ and boundary value
$\vec{h}$ are small, then the corresponding solution $u$ exists for
any time $t>0$ and, moreover,  its norm in the space $L^2 (0,L)$ is
uniformly bounded.
\begin{prop}\label{3-1}
There  exist positive constants $T$, $\delta _j,\ j=1,2 $ and $r$
such that if $(\phi, \vec{h})\in X_T$ satisfying
\begin{equation} \| \phi\|  \leq \delta _1,\quad \|\vec{h} \|_ {B_T}\leq \delta_2  \label{c-1}
\end{equation}
then the corresponding  solution $u$ of (\ref{3.1}) is globally
defined and belongs to the space $Y_T$. Moreover,
\[ \| u(\cdot, t)\|  \leq C_1 e^{-rt}\| \phi \|  + C_2 \|\vec{h}\|_{B_T} \]
for any $t\geq 0$ and \[ \| u\|_{Y_T} \leq C_3 \| (\phi , \vec{h}\|
_{X_T},\] where $C_1$, $C_2 $  and $C_3$ are constants depending
only on $T$, $\delta _1 $ and $\delta _2$.
\end{prop}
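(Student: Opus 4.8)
The plan is to iterate the local-in-time existence result (Proposition \ref{reg}) together with the decay estimate of Theorem \ref{th2.8}, using the discrete Gronwall-type Lemma \ref{lem2.2} to close the argument globally. First I would fix $T>0$ and $\delta>0$ as supplied by Proposition \ref{reg}, and shrink them if necessary so that the smallness conditions of Theorem \ref{th2.8} (applied with $a=u$) are also satisfied; in particular I will want $\delta_1,\delta_2$ chosen so that on every time step the $Y_{(0,T)}$-norm of the solution stays below the threshold $\delta$ appearing in both propositions. Set $y_n=u(\cdot,nT)$. On the time window $[nT,(n+1)T]$ the solution solves (\ref{3.1}) with initial datum $y_n$ and boundary datum $\vec h(\cdot+nT)$, so by Proposition \ref{reg} it exists on that window provided $\|y_n\|+\|\vec h\|_{B_{(nT,(n+1)T)}}$ is small, and by the integral formulation together with Proposition \ref{decay-1}, Proposition \ref{pro2.9}, and Lemma \ref{lem2.1} (the quadratic estimate on $(uu_x)$) we obtain
\[
\|y_{n+1}\| \;\le\; Ce^{-\beta T}\|y_n\| \;+\; C_T\|\vec h\|_{B_{(nT,(n+1)T)}} \;+\; C_T\|u\|_{Y_{(nT,(n+1)T)}}^2 .
\]
Feeding back the local bound $\|u\|_{Y_{(nT,(n+1)T)}}\le C(\|y_n\|+\|\vec h\|_{B_{(nT,(n+1)T)}})$ from Proposition \ref{reg} turns the quadratic term into $C(\|y_n\|^2+\|\vec h\|_{B_T}^2)$, which is exactly the structure $\|y_{n+1}\|\le \gamma\|y_n\|+\beta\|y_n\|^2+b_n$ required by Lemma \ref{lem2.2}, with $\gamma=Ce^{-\beta T}$ and $b_n=C_T\|\vec h\|_{B_{(nT,(n+1)T)}}+C\|\vec h\|_{B_T}^2$.

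Next I would choose $T$ large enough that $\gamma=Ce^{-\beta T}<1$, and then $\delta_1,\delta_2$ small enough that (a) the local existence hypothesis is met at every step, and (b) the iteration is trapped in a small ball: an inductive argument shows that if $\|y_n\|$ and $\|\vec h\|_{B_T}$ are below a fixed small bound then so is $\|y_{n+1}\|$, which is what makes the global continuation legitimate — the solution never leaves the regime where Proposition \ref{reg} applies on the next window. Once the bound $\|y_n\|\le$ (const)$\cdot(\delta_1+\delta_2)$ holds for all $n$, the quadratic term $\beta\|y_n\|^2$ can be absorbed: replacing $\gamma$ by $\gamma'=\gamma+\beta\sup_n\|y_n\|<1$ we are in the setting of Lemma \ref{lem2.2}(i), which yields $\|y_{n+1}\|\le (\gamma')^{n+1}\|y_0\|+\frac{b^*}{1-\gamma'}$ with $b^*=\sup_n b_n\le C\|\vec h\|_{B_T}$. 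This gives $\|u(\cdot,nT)\|\le C_1 e^{-rnT}\|\phi\|+C_2\|\vec h\|_{B_T}$ for an appropriate $r>0$; interpolating over each window $[nT,(n+1)T]$ with the local estimate of Proposition \ref{reg} (or Proposition \ref{pro2.7}) upgrades this to the stated bound for all real $t\ge0$, and summing/taking the supremum gives $\|u\|_{Y_T}\le C_3\|(\phi,\vec h)\|_{X_T}$.

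The main obstacle is the bootstrapping of global existence: Proposition \ref{reg} only guarantees existence on $[0,T]$ under a smallness condition, so to extend past $T$ one must know a priori that $\|u(\cdot,T)\|$ is again small — but that smallness is precisely the conclusion one is trying to prove. The resolution is the self-improving induction described above: because $\gamma<1$, the linear part of the iteration is a strict contraction, so a sufficiently small initial ball is invariant under one step, and this invariance is exactly what licenses applying Proposition \ref{reg} on $[nT,(n+1)T]$ for every $n$ simultaneously. Care is needed to make the smallness constants consistent across all three inputs (local existence threshold, quadratic-estimate constant, contraction factor), but no new analytic ingredient beyond what is already established is required — this is, as the authors note, essentially a perturbation of the linear result Theorem \ref{th2.8}.
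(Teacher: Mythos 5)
Your proposal is correct and follows essentially the same route as the paper's proof: the same integral-equation estimate combining the exponential decay of $W(t)$ with Proposition \ref{reg}'s local bound to get a recursion $\|y_{n+1}\|\le \gamma\|y_n\|+c\|y_n\|^2+b_n$ for $y_n=u(\cdot,nT)$, the same invariant-small-ball induction to license continuation on each successive window, and the same appeal to Lemma \ref{lem2.2} to conclude. (The only cosmetic difference is that you invoke Theorem \ref{th2.8} at the outset, which the paper's argument does not actually need here.)
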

\begin{proof}
 For given $\phi \in L^2 (0,L)$ and $\vec{h}\in B_T$, rewrite the IBVP  (\ref{3.1})
 in its integral form
\[ u(t)=W(t) \phi +W_b (t) \vec{h} -\int ^t_0 W(t-\tau )(uu_x)(\tau ) d\tau  .\]
For any given $T>0$,
 there exist $c_1
>0$ independent of $T$ and $c_2, \ c_3$ depending only
 on $T$ such that for any $0\leq t\leq T$,
 \begin{equation}
\| u (\cdot, t)\|   \leq c_1 e^{-rt} \| \phi \|_{L^2(0,L)} + c_2 \|
u\|_{Y_{(0,T)}}^2 + c_3 \|\vec{h}\|_{B_{0,T}}. \label{x-1}
\end{equation}
By Proposition \ref{reg}, there exists a $\delta  >0$ and a constant
$c_4 >0$ such that  if
\begin{equation}
\| (\phi , h)\|_{X_{(0,T)}}\leq \delta, \label{x-2}
\end{equation}
then
\[ \| u\|_{Y_{(0,T)}} \leq c_4 \|(\phi , h)\|_{X_{(0,T)}} .\]
Thus, if (\ref{x-2}) holds and (\ref{x-1}) is evaluated at $t=T$,
\[ \| u(\cdot, T)\|  \leq c_1 e^{-rT} \| \phi \| + c_5 (\| \phi \| ^2
+ \| \vec{h} \|_{B_{(0,T)}}^2 ) + c_3 \|\vec{h} \|_{B_{(0,T)}}\]
with $c_5= c_2c_4^2$. Since $c_1 $ is independent of $T$, one can
choose $T>0$ so that $ c_1e^{-rT} =\gamma <\frac12$.
 Then choose $\delta _1 $
and $\delta _2$ such that
\[  \delta _1 +\delta _2 \leq \delta \]
and
\[ \frac12 \delta _1 +c_5 (\delta _1^2 +\delta _2 ^2) +c_3 \delta _2   \leq \delta _1 .\]
For such values of $\delta _1 $ and $\delta _2$, we have that
\[ \| u(\cdot, T)\|  \leq \delta _1,\]
and, in addition, by the assumption,
\[ \|\vec{h}\|_{B_{(T,2T)} } \leq \delta _2 .\]
Hence repeating the argument, we have that
\[  \sup _{T\leq t\leq 2T} \| u(\cdot, t)\|  \leq \delta _1, \qquad \|u\|_{Y_{(T, 2T)}}\leq c_4\delta.\]
Continuing inductively, it is adduced that
\[ \sup _{t\geq 0} \| u(\cdot, t)\|  \leq \delta _1, \|u\|_{Y_{T}}
\leq c_4 \delta .\] Let $y_n=u(\cdot, nT)$ for $n=1,2, \cdots$.
Using the semigroup property of (\ref{3.1}),
one obtains
\[ \|y_{n+1}\|  \leq \frac12\|y_n\|  +c_2 \|y_n \|^2 +
c_3 \|h \|_{B_{(nT,(n+1)T)}}\] for any $n\geq 0$ provide $\| y_0\|
\leq \delta _1$ and
\[ \sup _{n\geq 0}\|\vec{h} \|_{B_{(nT,(n+1)T)}} \leq \delta _2 .\]
By Lemma \ref{lem2.2}, there exists $0<\nu <1$, $\delta _1^*>0$ and
$\delta _2^*>0$ such that if
\[ \|y_0 \|  \leq \delta _1^*, \quad b_n = C_3\|\vec{h} \|_{B_{(nT,(n+1)T)}}
\leq \delta _2 \leq \delta _2^*\] for all $n\geq 0$, then
\[ \| y_{n+1}\|  \leq \nu ^{n+1} \|y_0 \| + \frac{b^*}{1-\nu} \]
for all $n\geq 1$, where $b^* = \max _n \{b_n\}.$ This leads by
standard arguments to the conclusion of Proposition \ref{3-1}.
\end{proof}

\begin{prop}\label{3-2} Under the assumptions of Proposition
\ref{3-1}, if
\[  \|\vec{h}\|_{B_{(t, t+T)}}\leq g(t) e^{-\nu t} \ for \ all \
t\geq 0 \] with $\nu >0$ and $g\in B_T$ and $\| g\|_{B_T} \leq
\delta _2$,  then there exist a $0< \gamma \leq \max \{ r, \nu \} $
and $C>0$  such that
\[  \|u\|_{Y_{(t, t+T)}} \leq C\|(\phi , \vec{h})\|_{X_T} e^{-\gamma
t}\] for any $t\geq 0$.
\end{prop}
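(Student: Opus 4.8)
The plan is to mimic the proof of Theorem \ref{th2.8}, upgrading it to accommodate the quadratic nonlinearity by exploiting the global \emph{a priori} bound already furnished by Proposition \ref{3-1}. Write the IBVP (\ref{3.1}) in its integral form
\[ u(t) = W(t)\phi + W_b(t)\vec{h} - \int_0^t W(t-\tau)(uu_x)(\tau)\,d\tau, \]
and set $y_n = u(\cdot, nT)$. Combining the exponential decay of the semigroup $W(t)$ from Proposition \ref{decay-1}, the boundary-operator estimate of Proposition \ref{pro2.9}, and the bilinear estimate of Lemma \ref{lem2.1}, together with the semigroup property of (\ref{3.1}), one gets on each time window a recursion of the form
\[ \|y_{n+1}\| \le c_1 e^{-rT}\|y_n\| + c_2\|u\|_{Y_{(nT,(n+1)T)}}^2 + c_3\|\vec{h}\|_{B_{(nT,(n+1)T)}}, \]
with $c_1$ independent of $T$. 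By Proposition \ref{3-1}, under (\ref{c-1}) the solution is global, $\sup_n\|y_n\|$ can be made as small as desired (by shrinking $\delta_1,\delta_2$), and $\|u\|_{Y_{(nT,(n+1)T)}} \le c_4(\|y_n\| + \|\vec{h}\|_{B_{(nT,(n+1)T)}})$. Hence the quadratic term can be absorbed into the linear one, yielding
\[ \|y_{n+1}\| \le \rho_0\|y_n\| + b_n, \qquad b_n := C_3\|\vec{h}\|_{B_{(nT,(n+1)T)}}, \]
with $\rho_0 < 1$ once $T$ is chosen large (so that $c_1 e^{-rT}$ is small) and $\delta_1,\delta_2$ are chosen small.

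Next I would feed this recursion into Lemma \ref{lem2.2}. The hypothesis $\|\vec{h}\|_{B_{(t,t+T)}} \le g(t)e^{-\nu t}$ with $g \in B_T$ bounded gives $b_{n+1} \le \delta^{n} c_n$ with $\delta = e^{-\nu T}$ and $c_n = C_3 e^{-\nu T}\|g\|_{B_T}$ bounded; part (ii) of Lemma \ref{lem2.2} then produces
\[ \|y_{n+1}\| \le \rho_0^{\,n+1}\|y_0\| + n\, \varrho^{\,n} c^*, \qquad \varrho = \max\{\rho_0,\delta\}, \]
and since $n\varrho^n \le C_\varepsilon(\varrho+\varepsilon)^n$ for any $\varepsilon>0$, this gives $\|u(\cdot,nT)\| \le C\|(\phi,\vec{h})\|_{X_T}\, e^{-\gamma nT}$ for a suitable $0<\gamma\le\max\{r,\nu\}$. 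Finally, to pass from decay of the pointwise-in-time $L^2$ norm to decay of the full space-time norm $\|u\|_{Y_{(t,t+T)}}$, I would apply Proposition \ref{reg} on each window $(nT,(n+1)T)$, noting that it bounds $\|u\|_{Y_{(nT,(n+1)T)}} \le C(\|y_n\| + \|\vec{h}\|_{B_{(nT,(n+1)T)}})$, both terms of which have now been shown to decay exponentially; a standard covering argument over pairs of consecutive windows then extends the estimate to arbitrary $t\ge 0$.

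The routine parts are the bilinear and semigroup estimates (already packaged in Lemma \ref{lem2.1}, the Propositions of Section 2, and Lemma \ref{lem2.2}), so the proof is largely a perturbation of the linear argument of Theorem \ref{th2.8}. The one genuinely delicate point is ensuring that the nonlinear contribution $c_2\|u\|_{Y_{(nT,(n+1)T)}}^2$ does not destroy the geometric decay: this forces one to invoke the smallness of $\sup_n\|u(\cdot,nT)\|$ from Proposition \ref{3-1} in order to estimate $\|u\|_{Y_{(nT,(n+1)T)}}^2 \le (\text{small})\cdot\|u\|_{Y_{(nT,(n+1)T)}}$, re-close the recursion, and keep the effective contraction constant $\rho_0$ strictly below $1$ uniformly in $n$.
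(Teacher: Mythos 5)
Your argument is correct, but it is organized differently from the paper's. The paper disposes of Proposition \ref{3-2} in two lines: it sets $a=\frac12 u$, so that $uu_x=(au)_x$ and the nonlinear equation becomes the variable-coefficient linear equation (\ref{SP}) with coefficient $a$; since Proposition \ref{3-1} already gives $\|a\|_{Y_T}=\frac12\|u\|_{Y_T}\leq \frac12 C_3\|(\phi,\vec h)\|_{X_T}$, which is below the smallness threshold of Theorem \ref{th2.8} once $\delta_1,\delta_2$ are shrunk, the exponential decay is read off directly from the second half of Theorem \ref{th2.8}. You instead rerun the windowed iteration of Theorem \ref{th2.8} on the nonlinear integral equation itself, absorbing the quadratic term $c_2\|u\|^2_{Y_{(nT,(n+1)T)}}$ into the contraction by means of the uniform smallness $\sup_n\|y_n\|\leq\delta_1$ from Proposition \ref{3-1}, and then invoke Lemma \ref{lem2.2}(ii) and Proposition \ref{reg} to upgrade from decay of $\|u(\cdot,nT)\|$ to decay of $\|u\|_{Y_{(t,t+T)}}$. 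Both routes rest on the same three ingredients (Proposition \ref{decay-1}, Lemma \ref{lem2.2}, and the global bound of Proposition \ref{3-1}); yours is self-contained and makes explicit exactly where the smallness is consumed, at the cost of redoing the bookkeeping of Theorem \ref{th2.8}, while the paper's reduction is shorter but hides the (harmless) need to reconcile the window length $T$ and smallness constants of Theorem \ref{th2.8} with those of Proposition \ref{3-1}. The one point worth being careful about in your write-up is the order of quantifiers: $c_2,c_3,c_4$ depend on $T$, so $T$ must be fixed first (to make $c_1e^{-rT}$ small, $c_1$ being $T$-independent) and only then $\delta_1,\delta_2$ chosen small relative to the $T$-dependent constants; you state this in the right order, so there is no gap.
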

\begin{proof}
Setting $a=\frac12 u$, the equation in (\ref{3.1}) becomes
\[ u_t +u_x +(au)_x +u_{xxx}=0. \]
Proposition \ref{3-2} follows from Proposition \ref{3-1} and Theorem
\ref{th2.8} as a corollary.
\end{proof}

\medskip
Now we are at the stage to present the proofs of Theorem
\ref{global} and Theorem \ref{long time}.

\bigskip
{\bf Proof of Theorem \ref{global}}:   We only consider the case of
$0\leq s\leq 3$. The proof for the case of $s>3$ is similar. Note
first the theorem is Proposition \ref{3-1} when $s=0$. For $s=3$,
let $v=u_t $. Then $v$ solves
  \begin{equation}\label{TP}
        \begin{cases}
        v_t+v_x+v_{xxx}+(uv)_x=0,\qquad    x\in (0,L),  \ t >0,
        \\ v(x,0)=\phi ^*(x), \\
        v(0,t)=h'_1(t),\ u_x(L,x)=h'_2(t),\ v_{xx}(L,t)=h'_3(t)
        \end{cases}
    \end{equation}
    with $\phi ^*(x)= -\phi '(x)-\phi (x)\phi'(x)-\phi '''(x) $.
    Note that
    \[ \| u\|_{Y_T} \leq C\| (\phi. \vec{h}\|_{X_T} .\]
    Thus, invoking Theorem \ref{th2.8} yields  that $v\in Y_T$ and
    \[ \|v\|_{Y_T} \leq C \|(\phi ^*, \vec{h}')\|_{X_T} .\]
    Then,  it follows from the equation
    \[ u_{xxx}= -u_t -uu_x -u_x = -v -uu_x -u_x \]
    that $u\in Y^3_T$ and
    \[ \| u\| _{Y^3_T} \leq C \|(\phi , \vec{h}\|_{X^3_T} \]
    for some constat $C>0$ Thus, Theorem \ref{global} holds for
    $s=3$. The case of $0< s< 3$ then follows by using the nonlinear
    interpolation theory of Tartar \cite{tartar, bsz-half-1}. $\Box
    $

    \bigskip
    {\bf Proof of Theorem \ref{long time}}: For any $r>0$,
    define the space \[X^s _T (r)= \{ (\phi, \vec{h})\in X^s_T|\  e^{rt}(\phi , \vec{h}) \in
    X_T^s\} \]
    and
    \[ Y^s_T (r)= \{ u\in Y^s_T| \ e^{rt}u\in Y^s_T\}.\]
    Equipped with the norms
    \[ \| (\phi , \vec{h})\|_{X^s_T (r)}:= \| (\phi ,
    e^{rt}\vec{h})\|_{X^s_T} \]
    and
    \[ \| u\|_{Y^s_T(r)}: =\| e^{rt} u\|_{Y^s_T}, \]
    respectively, both $X^s_T(r)$  and $Y^s_T (r)$ are Banach
    spaces.  Theorem  \ref{long time} can then  be restated as

    \medskip
    \emph{For given $s\geq 0$ and $\nu >0$ with
\[ s\ne \frac{2j-1}{2}, \quad j=1,2,3,\cdots ,\] there exist  positive constants $T$, $\gamma
    $, $\delta $ and $C$ such that for $s-$compatible $(\phi ,
    \vec{h})\in X^s_T (\nu) $ with
    \[ \|(\phi , \vec{h})\|_{X^s_T (\nu)} \leq \delta , \]
    the corresponding solution $u$ of the IBVP(\ref{3-1}) belongs to
    the space $Y^s_T (\gamma)$ and
    \[ \| u\| _{Y^s_T(\gamma)} \leq C \| (\phi , \vec{h})\|_{X^s_T
    (\nu)} .\]}

    \medskip
    Its proof is  similar to    that of Theorem \ref{global}. $\Box$

    \section{Concluding remarks}
    \setcounter{equation}{0}
    The focus of our discussion has been the well-posedness of the initial value problem
    of the KdV equation posed on the finite interval $(0,L)$:

    \begin{equation}\label{4.1}
        \begin{cases}
        u_t+u_x+u_{xxx}+uu_x=0,\qquad    x\in (0,L),  \ t >0,
        \\ u(x,0)=\phi(x), \\
        u(0,t)=h_1(t),\ u_x(L,x)=h_2(t),\ u_{xx}(L,t)=h_3(t).
        \end{cases}
    \end{equation}
    It is considered with the initial data $\phi \in H^s (0,L)$ and
    the boundary data $\vec{h}=(h_1, h_2 , h_3 )$ belongs to the
    space $B^s_{(0,T)}= H^{\frac{s+1}{3}}(0,T)\times H^{\frac{s}{3}}
    (0,T)\times H^{\frac{s-1}{3} } (0, T)$ with $s\geq 0$.  Although
    the IBVP is known to be locally (in time) well-posed, whether
    solutions exist globally is still an open question because even
    the simplest global \emph{a priori $L^2-$ }estimate is not
    available for solutions of the IBVP (\ref{4.1}). As a partial
    answer to this open problem of the global well-posedness, we
    have shown in this article that the solutions exist  globally (in time) in
    the space $H^s(0,L)$  for any $s\geq 0$ as long as its  auxiliary data
    $(\phi , \vec {h})$ is small in the space $X^s_T$, which is an
    improvement of Colin and Ghidalia's early work \cite{col-ghida}.
    In addition, we have studied the long time behavior of those
    globally existed solutions and have shown  that those small amplitude
    solutions decay exponentially if their boundary value
    $\vec{h}(t)$ decays exponentially. In particular, those
    solutions satisfying homogenous boundary conditions decay
    exponentially in the space $H^s (0,L)$ if their initial values
    are small in  $H^s (0,L)$.

    \medskip

It is interesting to compare the IBVP (\ref{4.1}) with another
well-studied IBVP of the KdV equation posed on the finite interval
$(0,L)$:
    \begin{equation}\label{4.2}
        \begin{cases}
        u_t+u_x+u_{xxx}+uu_x=0,\qquad   x\in (0,L),  \ t >0,
        \\ u(x,0)=\phi(x), \\
        u(0,t)=h_1(t),\ u (L,x)=h_2(t),\ u_{x}(L,t)=h_3(t).
        \end{cases}
    \end{equation}
While the well-posedness results as described by Theorem C, Theorem
1.1 and Theorem 1.2 for the IBVP (\ref{4.1}) are all true for the
IBVP (\ref{4.2}), the IBVP (\ref{4.2}) is globally well-posed in the
space $H^s (0,L)$ for any $s\geq 0$ \cite{bsz-finite-1,Fam07}:

\medskip
\emph{for given $s-$compatible $(\phi ,\vec{h})\in H^s(0,L)\times
H^{\frac{s^*+1}{3}} (0,T)\times H^{\frac{s^*+1}{3}} (0,T)\times
H^{\frac{s^*}{3}} (0,T)$, the IBVP (\ref{4.2}) admits a unique
solution $u\in Y^s_{(0,T)}$. Here $ s^*=s^+ $ if $0\leq s< 3$ and
$s^*=s $ if $s\geq 3$.}

\medskip
The reason for the IBVP (\ref{4.2}) to be globally well-posedness is
simply that global \emph{a priori} $L^2$ estimate holds for
solution $u$ of the IBVP (\ref{4.2}) with homogeneous boundary
conditions;
\[ \frac{d}{dt}\int ^L_0 u^2(x,t)dx +u^2_x(0,t) =0 \quad for \ any \
t\geq 0.\]

Thus whether solutions of the IBVP (\ref{4.1}) exist globally or
blow up in finite time becomes really interesting. If it does not
blow up in finite time, then how to establish its global
well-posedness without knowing if its simplest global $L^2$ \emph{a
priori} estimate holds or not? As Archimedes said, ``\emph{Give me a
place to stand and with a lever I will move the whole world}.'' For
the IBVP (\ref{4.1}), if there are no global \emph{a priori}
estimates available,  how to prove its global well-posedness? On the
other hand, if some solutions of the IBVP (\ref{4.1}) do blow up in
finite time, that would be also very interesting since the blow up
would be mainly caused by the boundary conditions rather than the
nonlinearity of the equation. We are not aware of any such kind of
results existed in the literature.

Finally we  point out that, started by the work of Ghidalia
\cite{ghida-1} in 1988, the KdV equation posed on a finite domain
has also been studied intensively from dynamics point of views
\cite{bsz-forcing, ghida-1,ghida-2,12,uz-1, uz-2,ybz,bz-1, bz-2}.
One of the questions people are interested is whether the equation
admits a time periodic  solution if the external forcing functions
are time periodic. Such a time periodic solution, if exists, is
called \emph{forced oscillation}, which can be viewed as a
\emph{limit cycle} from dynamics point of view. A further question
to study for this \emph{limit cycle} is: what is its stability? In
\cite{uz-2}, Usman and Zhang has obtained the following result for
the following system  associate to the IBVP (\ref{4.2}):
   \begin{equation}\label{4.3}
        \begin{cases}
        u_t+u_x+u_{xxx}+uu_x=0,\qquad   x\in (0,L),  \ t >0,
        \\
        u(0,t)=h_1(t),\ u (L,x)=0,\ u_{x}(L,t)=0.
        \end{cases}
    \end{equation}

\medskip
{\bf Theorem D:} \emph{There exists a $\delta >0$ such that if
$h_1\in H^{\frac13}_{loc} (R^+)$ is a time -periodic function of
period $\tau $ satisfying $\| h_1\|_{H^{\frac13} (0, \tau )} \leq
\delta $, then (\ref{4.3})  admits a time periodic solution
$$u^* \in C_b (0, \infty ; L^2 (0,L))\cap L^2_{loc} (0, \infty; H^1
(0,L)),$$ which is locally exponentially stable.}

\medskip  The same result holds for following system associated to the IBVP (\ref{4.1}):
   \begin{equation}\label{4.4}
        \begin{cases}
        u_t+u_x+u_{xxx}+uu_x=0,\qquad   x\in (0,L),  \ t >0,
        \\
        u(0,t)=h_1(t),\ u _x(L,x)=h_2(t),\ u_{xx}(L,t)=h_3(t).
        \end{cases}
    \end{equation}
    In fact, using the
same approach as  that in \cite{uz-2} with a slight modification, we
have the following theorem for the system (\ref{4.4}).
\begin{thm}  There exists a $T>0$ and
$\delta >0$ such that if $\vec{h}\in B_T$ is  a time -periodic
function of period $\tau $ satisfying
\[ \| \vec{h}\|_{B_T} \leq \delta ,\]
then system  (\ref{4.4}) admits a admits a time periodic solution
$$u^* \in  Y_T,$$ which is locally exponentially stable.
\end{thm}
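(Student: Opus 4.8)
The plan is to realize $u^*$ as a fixed point of a Poincar\'e (period) map and to read off both its existence and its stability from the global decay estimates already established in Sections 2 and 3. Fix $T>0$ and a small $\delta>0$ for which Proposition \ref{3-1} and Theorem \ref{th2.8} are applicable, and assume $\|\vec h\|_{B_T}\le\delta$. For $\phi\in L^2(0,L)$ with $\|\phi\|$ small let $u(\phi)$ be the global solution of the IBVP (\ref{3.1}) (whose equation and boundary conditions are those of (\ref{4.4})) supplied by Proposition \ref{3-1}, and set $P\phi:=u(\phi)(\cdot,\tau)$. Since the equation is autonomous and $\vec h$ has period $\tau$, the shift $u(\phi)(\cdot,\cdot+\tau)$ solves the same IBVP with initial value $P\phi$; by uniqueness this forces $P^n\phi=u(\phi)(\cdot,n\tau)$, so any fixed point of $P$ produces a $\tau$-periodic solution of (\ref{4.4}).

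First I would show $P^n$ is a contraction on a small closed ball of $L^2(0,L)$ for $n$ large. By Proposition \ref{3-1} there is $\delta_1>0$ with $\sup_{t\ge0}\|u(\phi)(\cdot,t)\|\le\delta_1$ whenever $\|\phi\|\le\delta_1$ and $\|\vec h\|_{B_T}\le\delta$, so $P^n$ maps $B:=\{\phi:\|\phi\|\le\delta_1\}$ into itself. Given $\phi_1,\phi_2\in B$, the difference $w=u(\phi_1)-u(\phi_2)$ solves $w_t+w_x+(aw)_x+w_{xxx}=0$ with $a=\tfrac12\bigl(u(\phi_1)+u(\phi_2)\bigr)$ and \emph{homogeneous} boundary data, and $\|a\|_{Y_T}$ can be made as small as needed because $\|u(\phi_i)\|_{Y_T}\le C\|(\phi_i,\vec h)\|_{X_T}$ by Proposition \ref{3-1}. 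Theorem \ref{th2.8} then gives $\|P^n\phi_1-P^n\phi_2\|=\|w(\cdot,n\tau)\|\le C_1e^{-rn\tau}\|\phi_1-\phi_2\|$. Choosing $n$ with $C_1e^{-rn\tau}<1$ and invoking the contraction mapping principle produces a unique $\phi^*\in B$ with $P^n\phi^*=\phi^*$; setting $u^*:=u(\phi^*)$, Proposition \ref{3-1} gives $u^*\in Y_T$ with $\|u^*\|_{Y_T}$ small.

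Next I would upgrade periodicity from $n\tau$ to $\tau$ using the uniqueness of the fixed point. Since $\vec h$ has period $\tau$ (hence $n\tau$), the shift $u^*(\cdot,\cdot+n\tau)$ solves (\ref{3.1}) with initial value $u^*(\cdot,n\tau)=\phi^*$, so by uniqueness $u^*$ is $n\tau$-periodic. Then $v(\cdot,t):=u^*(\cdot,t+\tau)$ solves (\ref{3.1}) with initial value $u^*(\cdot,\tau)$, whence $P^n\bigl(u^*(\cdot,\tau)\bigr)=v(\cdot,n\tau)=u^*(\cdot,(n+1)\tau)=u^*(\cdot,\tau)$; moreover $\|u^*(\cdot,\tau)\|\le\delta_1$, so $u^*(\cdot,\tau)$ is a fixed point of $P^n$ in $B$ and therefore equals $\phi^*$. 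Uniqueness for the IBVP then gives $v\equiv u^*$, i.e. $u^*$ has period $\tau$. Local exponential stability is the same perturbative computation: if $u$ solves (\ref{3.1}) with the same boundary data $\vec h$ and initial value $\phi$ sufficiently close to $\phi^*$, then $w=u-u^*$ solves $w_t+w_x+(aw)_x+w_{xxx}=0$ with $a=\tfrac12(u+u^*)$, homogeneous boundary data and $w(\cdot,0)=\phi-\phi^*$; since $\|a\|_{Y_T}$ is small, Theorem \ref{th2.8} yields $\|w\|_{Y_{(t,t+T)}}\le C_1e^{-rt}\|\phi-\phi^*\|$ for all $t\ge0$, which is the asserted stability.

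The main difficulty is not the fixed-point scheme itself but having the right analytic inputs: everything rests on the uniform-in-time, exponentially decaying estimate of Theorem \ref{th2.8} for the variable-coefficient linearized equation together with the small-data global theory of Proposition \ref{3-1}, and these are exactly what make $P^n$ a genuine contraction and what drive the stability bound. The one point demanding care is a bookkeeping one: choosing $T$, $\delta$, $\delta_1$ and the index $n$ simultaneously so that Proposition \ref{3-1}, Theorem \ref{th2.8}, the self-mapping of $P^n$ on $B$, and the uniqueness used to descend from period $n\tau$ to period $\tau$ all hold at once.
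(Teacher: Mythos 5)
Your proposal is correct and is essentially the argument the paper intends: the paper gives no in-line proof but appeals to the method of Usman--Zhang \cite{uz-2}, which is exactly this Poincar\'e-map scheme --- contraction of the period-$n\tau$ map on a small $L^2$ ball via the exponential decay of Theorem \ref{th2.8} applied to the difference equation with homogeneous boundary data, followed by the uniqueness trick to descend from period $n\tau$ to period $\tau$, with stability coming from the same linearized estimate. No substantive difference from the paper's (referenced) route.
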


\bigskip
\textbf{Acknowledgments}. Ivonne Rivas was partially supported by
the Taft Memorial Fund at the University of Cincinnati through
Graduate Dissertation Fellowship. Bing-Yu Zhang was partially
supported  by the Taft Memorial Fund at the University of Cincinnati
and  the Chunhui program (State Education Ministry of China) under
grant Z007-1-61006.



\end{document}